\title{Quantitative Thomas-Yau uniqueness}
\author{Yang Li}
\date{\today}
\newtheorem{thm}{Theorem}[section]
\newtheorem{lem}[thm]{Lemma}
\theoremstyle{definition}
\newtheorem{cor}[thm]{Corollary}
\newtheorem{rmk}{Remark}
\newtheorem{prop}[thm]{Proposition}
\newtheorem{Def}[thm]{Definition}
\newtheorem{Question}{Question}
\newtheorem*{Acknowledgement}{Acknowledgement}
\newcommand{\ie}{\emph{i.e.} }
\newcommand{\cf}{\emph{cf.} }
\newcommand{\R}{\mathbb{R}}
\newcommand{\C}{\mathbb{C}}
\newcommand{\Z}{\mathbb{Z}}
\newcommand{\Q}{\mathbb{Q}}
\newcommand{\norm}[1]{\left\lVert#1\right\rVert}
\newcommand{\Lap}{\Delta}
\begin{document}
	\maketitle

\begin{abstract}
	Under Floer theoretic conditions, 
we obtain quantitative estimates on the closeness (Hausdorff distance, flat norm, F-metric) between two Lagrangians, depending on the smallness of Lagrangian angles. Some applications include a strong-weak uniqueness theorem for special Lagrangians, and a characterization of varifold convergence to special Lagrangians in terms of Lagrangian angles.
\end{abstract}

\section{Background and introduction}

Inside an $n$-dimensional compact K\"ahler manifold $(X, \omega)$ with a nowhere vanishing holomorphic volume form $\Omega$ such that $\frac{\omega^n}{n!}=e^{2\rho}\frac{i^{n^2}}{2^n}\Omega\wedge \overline{\Omega}$, an $n$-dimensional compact oriented submanifold $L$ is called \textbf{special Lagrangian}, if 
\begin{equation}
\omega|_L=0, \quad \text{Im} \Omega|_L=0.
\end{equation}
These sit at the crossroad of minimal surface theory and symplectic geometry:
\begin{itemize}
\item If the metric is Calabi-Yau (namely $\rho=0$), then special Lagrangians are absolute volume minimizers inside their homology classes.

\item Lagrangian submanifolds equipped with (unobstructed) brane structures define objects inside the Fukaya category. 
\end{itemize}

\subsection{Thomas-Yau uniqueness}

As part of a wider program to relate the existence and uniqueness questions of special Lagrangian branes, to Fukaya categories and stability condition, Thomas and Yau \cite{ThomasYau} proved a remarkable uniqueness theorem, which from the modern perspective reads

\begin{thm}\cite{ThomasYau}\cite{JoyceImagi}\cite{Imagi}\label{ThomasYauuniqueness}
Let $L,L'$ be two compact embedded special Lagrangians with unobstructed brane structures, defining isomorphic objects in the derived Fukaya category, then their supports coincide.
\end{thm}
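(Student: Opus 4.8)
The plan is to run the Floer-theoretic ``grading obstruction'' argument of Thomas--Yau, in the rigorous form of Joyce--Imagi and Imagi. \emph{First}, translate the categorical hypothesis into Floer cohomology: since $L$ and $L'$ are honest unobstructed, graded, Pin Lagrangian branes, one has $\Hom_{D^\pi\mathcal F(X)}(L,L')=HF^\bullet(L,L')$ and likewise with the roles exchanged, so an isomorphism of objects produces classes $[f]\in HF^0(L,L')$ and $[g]\in HF^0(L',L)$ whose $\mu^2$-compositions are the two units; here I normalise the gradings so that both $L$ and $L'$ have constant Lagrangian angle $0$, which is legitimate since $\text{Im}\,\Omega|_L=\text{Im}\,\Omega|_{L'}=0$. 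Writing $L=\bigsqcup_a L_a$, $L'=\bigsqcup_b L'_b$ for the connected components and reading matrix entries of the identity relations, for each $a$ there is a $b$ with $HF^0(L_a,L'_b)\neq0$, and symmetrically. It therefore suffices to prove the implication $HF^0(L_a,L'_b)\neq0\Rightarrow L_a=L'_b$ as subsets, after which $\operatorname{supp}L=\operatorname{supp}L'$ follows by taking unions.

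\emph{Second}, the geometric heart, which I would isolate as a lemma. At $p\in L\cap L'$ let $\theta_1(p),\dots,\theta_n(p)\in[0,\pi)$ be the directed canonical angles between $T_pL$ and $T_pL'$; the number of nonzero ones is $n-\dim(T_pL\cap T_pL')$, and because $\text{Im}\,\Omega$ vanishes on both tangent planes the relative $(\det_{\mathbb C})^2$-phase is trivial, so $\sum_j\theta_j(p)\in\pi\Z$. If $L_a\cap L'_b$ has a point with $T_pL\neq T_pL'$, then along the corresponding clean piece $C$ (obtained by stratifying the real-analytic set $L_a\cap L'_b$, or by a transverse Hamiltonian perturbation of the pair) the Floer/Morse--Bott generators supported on $C$ have gradings $i+s$ with $i\in[0,\dim C]$ a Morse degree on $C$ and $s=(n-\dim C)-\tfrac1\pi\sum_{\mathrm{normal}}\theta_j$; since $\sum_{\mathrm{normal}}\theta_j\in\pi\Z\cap(0,(n-\dim C)\pi)$ we get $s\in\{1,\dots,n-\dim C-1\}$, so every such grading lies in $\{1,\dots,n-1\}$ and never equals $0$ or $n$. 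If instead $T_pL=T_pL'$ along a whole connected piece of $L_a\cap L'_b$, that piece is open in $L_a$ and in $L'_b$, and since $L_a,L'_b$ are connected special Lagrangians agreeing on a nonempty open set, \emph{unique continuation} for the first-order elliptic system $\omega|_L=\text{Im}\,\Omega|_L=0$ forces $L_a=L'_b$. Hence $HF^0(L_a,L'_b)\neq0$ can only come from a common component, which gives $L_a=L'_b$, closing the loop.

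\emph{The hard part}, I expect, is the analysis hidden in the second step rather than the homological algebra. Special Lagrangians are rigid, so $L\cap L'$ cannot be made transverse by Hamiltonian isotopy and is a priori only a closed real-analytic subset; one must legitimately define, or at least control which degrees are occupied by, $HF^\bullet(L,L')$ in this non-transverse, possibly non-clean setting, and exclude that the Floer differential cancels a degree-$0$ or degree-$n$ contribution from the ``$L=L'$'' locus against one from the $\{1,\dots,n-1\}$ locus. This is precisely the technical content carried out in \cite{ThomasYau,JoyceImagi,Imagi}, via either a clean-intersection Morse--Bott Floer package after a stratification, or a transverse perturbation combined with $C^0$-stability of tangent planes and continuity of the Maslov index; note the grading obstruction also handles arbitrary grading shifts, since of the complementary degrees $m,-m$ at most one lies in $\{1,\dots,n-1\}$, so no regraded pair of distinct special Lagrangians can be isomorphic. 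Finally, in the spirit of the present paper, I would observe that Theorem~\ref{ThomasYauuniqueness} also drops out of a quantitative closeness estimate: for genuine special Lagrangians the Lagrangian-angle defect controlling such an estimate vanishes identically, forcing the Hausdorff distance, the flat norm, and the $F$-metric between $L$ and $L'$ all to vanish.
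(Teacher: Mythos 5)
Your proposal reconstructs the classical grading-obstruction route, which is essentially what the paper outlines in its introduction: reduce to $HF^0(L,L')\neq 0$, exploit the fact that at an intersection point of two genuine special Lagrangians the Floer degree (\ref{degree}) with $\theta_L=\theta_{L'}=0$ is forced into $\{1,\dots,n-1\}$, derive a contradiction. Your normalisation $\theta\equiv 0$, your reduction to connected components, and your observation that the angle sum is an integer multiple of $\pi$ all match the paper's sketch. So far so good.

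The genuine gap is in the dichotomy you set up for a connected piece $C$ of $L_a\cap L'_b$. You assert that (i) if $T_pL\neq T_pL'$ somewhere, the Morse--Bott grading along the ``corresponding clean piece'' lies in $\{1,\dots,n-1\}$, and (ii) if $T_pL=T_pL'$ throughout, then $C$ is open in $L_a$. Both assertions silently require the intersection to be \emph{clean} along $C$, i.e.\ $T_pC=T_pL\cap T_pL'$. But a stratification of the real-analytic set $L_a\cap L'_b$ does not automatically produce clean strata: two distinct special Lagrangians can be tangent to first order along a stratum of positive codimension in the common tangent plane (the extreme case being an isolated tangential intersection point). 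In that situation (i) is vacuous because no clean piece exists, and (ii) is false because $C$ is not open. This non-clean tangential case is precisely where the original arguments spend their effort --- Thomas--Yau via a Morse-theoretic choice of perturbing Hamiltonian, Joyce--Imagi via real-analyticity and the Lojasiewicz inequality --- and your proposal acknowledges this in the last paragraph but then simply points to \cite{ThomasYau}\cite{JoyceImagi}\cite{Imagi} rather than supplying the step. So as a self-contained proof the argument does not close; as a proof outline it is consistent with the paper's own sketch, which is likewise not a proof.

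Two further remarks. First, the ``transverse Hamiltonian perturbation'' you offer as an alternative to stratification does not trivially preserve the angle argument: after a perturbation $\theta_L,\theta_{L'}$ are no longer zero, so $\sum_j\alpha_j\in\pi\Z$ may fail and the degree can drift to $0$ or $n$ unless the perturbation is constructed with care --- again exactly the technical content of the cited proofs. Second, your closing observation is essentially the paper's point, but the paper realises it more sharply: Lemma~\ref{smallC0density} alone already gives a \emph{new} proof of Theorem~\ref{ThomasYauuniqueness} that dispenses with both Morse theory and real analyticity. One applies that lemma with arbitrary $\epsilon>0$ (special Lagrangians have $\norm{\theta}_{C^0}=0\leq\epsilon$), concludes that every $p\in L$ lies within $C\epsilon^{1/2n}$ of $L'$, lets $\epsilon\to 0$ to get $L\subset L'$, and symmetrises. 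This is cleaner than the route your proposal (and the classical references) take, and it is where the present paper adds something beyond the sketch.
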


Recall the \textbf{Lagrangian angle} $\theta: L\to \R$ is defined by $\Omega|_L= e^{-\rho} e^{i\theta}dvol_L$, so special Lagrangians amount to the condition $\theta=0$.  The \textbf{Floer degree} of a transverse intersection point $p\in CF^*(L_1,L_2)$ between two Lagrangians $L_1, L_2$ is
\begin{equation}\label{degree}
\mu_{L_1,L_2} (p)=\frac{1}{\pi} (\sum_1^n \alpha_i - \theta_{L_2} (p) + \theta_{L_1} (p)  )\in \Z,
\end{equation}
where inside $T_pX\simeq \C^n$ we can put the tangent spaces into the standard form
\[
T_p L_1= \R^n, \quad T_pL_2= (e^{i\alpha_1},\ldots e^{i\alpha_n})\R^n,\quad 0<\alpha_i<\pi.
\]

The proof idea of Theorem \ref{ThomasYauuniqueness} can be outlined as follows. Assume $L\neq L'$.

\begin{itemize}
\item By making $C^\infty$-small Hamiltonian perturbations of $L,L'$, we can replace $L,L'$ by isomorphic objects $L_1,L_2$ \emph{intersecting transversely}. Morever, either by a Morse theoretic argument \cite{ThomasYau}, or using real analyticity via a Lojasiewicz inequality \cite{JoyceImagi}, 
one can remove the degree zero intersection points, to ensure the Floer cochain group $CF^0(L_1,L_2)=0$.

\item Consequently the Floer cohomology $HF^0(L,L)\simeq HF^0(L_1,L_2)=0$. In particular the unit of the Floer cohomology ring vanishes, which is impossible, because it violates Poincar\'e duality (alternatively, because the image of the unit under the open-closed map is the homology class $[L]\in H_n(X, \Lambda_{nov})$, which cannot be zero).
\end{itemize}

We take note of a few conceptual features:
\begin{itemize}
\item The proof of Thomas-Yau uniqueness is analogous to a \emph{strong maximum principle} argument. The role of symplectic topology is to force the existence of a degree zero intersection point, which is analogous to the existence of a maximum, and a local calculation concerning the intersection point results in a contradiction.

\item The metric is not necessarily Calabi-Yau. The compactness of $X$ can be replaced by any other standard settings where Floer theory makes sense.

\item  The proof is not sensitive to the details of the Fukaya category. 

\item The known proofs rely essentially on smoothness assumptions of the Lagrangians.

\end{itemize}

\subsection{Quantitative Thomas-Yau uniqueness}

A common theme in geometric analysis is \emph{rigidity theorems}, e.g. a strong maximum principle naturally suggests a Harnack inequality. Analogously, we ask

\begin{Question}
Suppose $L,L'$ have \emph{small} Lagrangian angles (eg. $\norm{\theta}_{C^0}\ll 1$, or $\norm{\theta}_{L^1}\ll 1$), then do they have to be \emph{uniformly} close to each other (eg. in the Hausdorff distance for subsets, or in the flat norm distance for integral currents, or the $F$-metric for varifolds)?
\end{Question}

\begin{rmk}
The very recent paper of Abouzaid and Imagi \cite{AbouzaidImagi} studies symplectic topological consequences of special Lagrangians lying inside a small \emph{$C^0$-neighbourhood} of a given special Lagrangian, under additional hypotheses on the fundamental group and its representations.  
\end{rmk}

We now indicate why new ideas are needed for this question. In a na\"ive strategy, one takes a sequence of $L_i, L_i'$, with Lagrangian angles converging to zero, and attempts to use compactness theorems in geometric measure theory to extract subsequential limits $L_\infty,L'_\infty$, which should be special Lagrangian integral currents. To deduce that $L_i$ is close to $L_i'$, one would like to prove $L_\infty=L'_\infty$. This would require a \emph{singular Lagrangian} version of the Thomas-Yau uniqueness theorem, which is yet unknown. (The interested reader may see \cite[section 5.7]{Li} for some heuristic ideas).

Our main result is
\begin{thm}\label{main}
Let $L,L'$ be two compact, smoothly embedded Lagrangians with unobstructed brane structures, in the same fixed homology classes in $H_n(X,\Q)$, such that $HF^0(L,L')\neq 0$ or $HF^0(L',L)\neq 0$ holds. Assume there is some small enough $\epsilon$, such that the Lagrangian angles satisfy
\begin{itemize}
\item $\norm{\theta_L}_{C^0} \leq \epsilon\ll 1.$
\item $\norm{\theta_{L'}}_{C^0}\leq \frac{\pi}{2}-\epsilon_0$ for some fixed $\epsilon_0>0$, and $\text{Vol}(\{ \theta_{L'}>\epsilon \})\ll \epsilon^n$.
\end{itemize}
Then the Hausdorff distance between $L, L'$ is uniformly small:
\begin{equation}
\begin{cases}
\sup_{p\in L} \text{dist}(p,L') \leq C\epsilon^{\frac{1}{2n}},
\\
\sup_{p\in L'} \text{dist}(p,L)\leq C\epsilon^{ \frac{1}{4n^2}  }.
\end{cases}
\end{equation}
The flat norm distance between $L$ and $L'$ is bounded uniformly by $C\epsilon^{ \frac{1}{4n}+ \frac{1}{4n^2}  }$. The $F$-metric between $L,L'$ is bounded by $C\epsilon^{ \frac{1}{8n}  }$. 
The constants depend on $\epsilon_0$, $X$, and the $C^2$-regularity bounds on $L$, but not on the regularity bounds on $L'$, nor on the small $\epsilon$. 
\end{thm}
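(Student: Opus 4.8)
\emph{Overview.} I would run a ``quantitative strong maximum principle'' in the spirit of the Thomas--Yau proof sketched above. Use the closed calibrating form $\operatorname{Re}\Omega$ and the homological hypothesis to control the masses of $L$ and $L'$ and to make both currents almost area minimizing; use the Floer hypothesis, after a $C^\infty$-small Hamiltonian perturbation, to force a transverse intersection point of Floer degree zero and hence to prevent $L'$ from escaping to a definite distance from $L$; and finally feed a mass bound on a filling of $L-L'$ into a slicing/monotonicity argument to extract the Hausdorff, flat and varifold estimates. The crucial asymmetry is that $L$ is $C^2$-controlled and nearly special Lagrangian, so it may be used as a fixed smooth reference with a tubular neighbourhood of definite radius and near-Euclidean density ratios, whereas $L'$ is a bare competitor: the only inputs about it are its Lagrangian-angle bound and the smallness of the bad set $\{\theta_{L'}>\epsilon\}$, so every quantitative statement about $L'$ must come from measure theory --- density ratios, monotonicity, slicing, Allard-type tilt excess.

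\emph{Calibration bookkeeping and the Floer point.} Since $\operatorname{Re}\Omega$ is closed and $[L]=[L']$ in $H_n(X,\mathbb Q)$, we have $\int_L\operatorname{Re}\Omega=\int_{L'}\operatorname{Re}\Omega$. Feeding in $\operatorname{Re}\Omega|_L=e^{-\rho}\cos\theta_L\,d\mathrm{vol}_L$ (and the analogue for $L'$), the bound $\|\theta_L\|_{C^0}\le\epsilon$, the bound $\|\theta_{L'}\|_{C^0}\le\frac{\pi}{2}-\epsilon_0$, and $\operatorname{Vol}(\{\theta_{L'}>\epsilon\})\ll\epsilon^{n}$ gives a priori bounds on $\mathbf M(L),\mathbf M(L')$, the mass comparison $|\mathbf M(L)-\mathbf M(L')|\lesssim\epsilon^{2}+o(\epsilon^{n})$, smallness of the angle defects $\int_L(1-\cos\theta_L)$ and $\int_{L'}(1-\cos\theta_{L'})$, and the statement that $L$ --- and $L'$ away from the bad set --- is a $(1+O(\epsilon^2))$-almost area minimizer both globally in its homology class and locally in small balls (with $\cos\theta_{L'}\ge\sin\epsilon_0>0$ available everywhere as a crude replacement). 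In particular both currents have uniform two-sided density ratios up to a fixed scale. Separately, after a Hamiltonian perturbation moving points and angles by $O(\epsilon)$, the hypothesis $HF^0(L,L')\ne0$ (or its mirror) produces a transverse intersection point $p$ carrying a degree-zero Floer cocycle; by \eqref{degree}, $\sum_i\alpha_i(p)=\theta_{L'}(p)-\theta_L(p)\in(0,\frac{\pi}{2}-\frac{\epsilon_0}{2})$, so $\prod_i\cos\alpha_i(p)\ge\cos\big(\sum_i\alpha_i(p)\big)\ge\sin\frac{\epsilon_0}{2}$ and $T_pL,T_pL'$ are quantitatively transverse-to-parallel; in particular $\operatorname{dist}(L,L')\lesssim\epsilon$.

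\emph{The crux: quantitative trapping and a thin filling.} The heart of the matter is to upgrade ``$L$ and $L'$ meet once'' to ``$L'$ is trapped in a tube around $L$, thin except over a small set.'' I would prove $\sup_{p\in L}\operatorname{dist}(p,L')\le C\epsilon^{1/2n}$ by contradiction: if some ball $B_\delta(p)$ with $\delta>C\epsilon^{1/2n}$ misses $L'$, then the smooth almost-calibrated surface $L$ contributes mass $\gtrsim\delta^{n}$ inside it, and a cutoff calibration identity $\int_L\chi\operatorname{Re}\Omega=\int_S d\chi\wedge\operatorname{Re}\Omega$ (for a filling $S$ with $\partial S=L-L'$ and $\chi$ supported in $B_\delta(p)$, where $L'$ is absent) together with the almost-minimizing comparison forces either a filling of mass too large for the homological/Floer structure to allow, or an angle defect of $L$ exceeding the $O(\epsilon^2)$ budget --- a contradiction. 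Running the same ideas with bookkeeping rather than a dichotomy produces a filling $S$ with $\mathbf M(S)\le C\epsilon^{\frac1{4n}+\frac1{4n^2}}$: off the portion of $L$ lying above the bad set, of measure $\ll\epsilon^{n}$, the two almost-calibrated inequalities force $L'$ to hug $L$ at scale $O(\epsilon)$, while over that portion $L'$ may stray up to $C\epsilon^{1/2n}$; balancing the two contributions gives the asserted flat-norm bound. This step is the main obstacle, and where genuinely new input beyond the classical proof is required: Thomas--Yau extracts only a qualitative contradiction from a single degree-zero point, whereas here one must propagate that point, through the homological relation and the calibration inequalities alone, into a quantitative trapping of a competitor $L'$ carrying no regularity at all; coordinating the bad-set bookkeeping with the later slicing estimates so that the exponents come out as stated is the delicate part.

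\emph{From the filling to the remaining estimates.} Given $S$ with $\partial S=L-L'$ and the mass bound above, fix a base point $q$ on $L'$ outside the bad set (such $q$ exist within every $\epsilon^{1/4n^2}$-ball, by the volume bound on $\{\theta_{L'}>\epsilon\}$), and set $r=\operatorname{dist}(q,L)$, so $B_r(q)\cap L=\varnothing$. Let $m(t)$ be the mass of the part of $L'$ inside $B_t(q)$. Slicing $S$ by the spheres $\partial B_t(q)$ yields a competitor inequality $\cos\epsilon\cdot m(t)\le\mathbf M(\langle S,t\rangle)$ up to a bad-set correction; combined with the lower density bound $m(t)\ge ct^{n}$ and the coarea estimate $\int_0^{r}\mathbf M(\langle S,t\rangle)\,dt\le\mathbf M(S)$, averaging over $t\in[r/2,r]$ forces $r^{\,n+1}\le C\,\mathbf M(S)$, hence $\sup_{q\in L'}\operatorname{dist}(q,L)\le C\,\mathbf M(S)^{1/(n+1)}\le C\epsilon^{1/4n^{2}}$. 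Finally, since $L'$ is an almost area minimizer, an Allard-type tilt-excess estimate converts flat closeness to $L$ on the tubular neighbourhood into varifold closeness, with the standard square-root loss in the defect; the $F$-metric bound then follows from the flat-norm bound and gives $\le C\epsilon^{1/8n}$. Throughout, all constants depend only on $\epsilon_0$, $X$ and the $C^2$-bounds of $L$, and never on $\epsilon$ or on any regularity of $L'$.
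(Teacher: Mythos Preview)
Your proposal has a genuine gap at the ``crux'' step, and it is precisely the step you yourself flag as the main obstacle. You attempt to prove $\sup_{p\in L}\operatorname{dist}(p,L')\le C\epsilon^{1/2n}$ by a filling/calibration contradiction: if $B_\delta(p)\cap L'=\varnothing$, then the cutoff identity $\int_L\chi\operatorname{Re}\Omega=\int_S d\chi\wedge\operatorname{Re}\Omega$ forces $\mathbf M(S\cap B_\delta(p))\gtrsim\delta^{n+1}$, and you assert that this is ``too large for the homological/Floer structure to allow.'' But nothing in the hypotheses gives an a~priori upper bound on the mass of any filling $S$ with $\partial S=L-L'$. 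The homology condition $[L]=[L']$ only says such an $S$ exists; the Floer condition $HF^0(L,L')\neq 0$ is a cohomological statement and carries no quantitative area content. Your subsequent construction of a thin $S$ with $\mathbf M(S)\le C\epsilon^{\frac1{4n}+\frac1{4n^2}}$ already presupposes the trapping you are trying to prove, so the argument is circular. Relatedly, your use of the Floer hypothesis only extracts a \emph{single} degree-zero intersection (hence $\operatorname{dist}(L,L')\lesssim\epsilon$ at one point), and you never explain how to propagate this to every $p\in L$.

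The paper's mechanism is quite different and is the idea you are missing. For \emph{each} $p\in L$ one builds a \emph{tailored} Hamiltonian perturbation: solve the weighted Poisson equation $\mathcal L(h)=f$ on $L$, with $f$ equal to a large constant outside $B(p,\eta)$ and of controlled size $O(\eta^{-n})$ inside, so that the time-$\epsilon$ flow raises $\theta$ above $2\epsilon$ on $\varphi_\epsilon(L\setminus B(p,\eta))$. By the degree formula any degree-zero generator of $CF^0(\varphi_\epsilon(L),L')$ must then land in $\varphi_\epsilon(L\cap B(p,\eta))$, and $HF^0\neq 0$ forces such a generator to exist. Choosing $\eta\sim\epsilon^{1/2n}$ makes the error terms close up and yields $\operatorname{dist}(p,L')\le C\epsilon^{1/2n}$ for every $p$. (To pass from $\|\theta_{L'}\|_{C^0}\le\epsilon$ to the weak-$L^1$ hypothesis, an Arnold-style generic perturbation within a finite-dimensional family ensures the small-measure set $\{\theta_{L'}>\epsilon\}$ contributes no intersections.) Note in particular that this half of the Hausdorff bound does \emph{not} use $[L]=[L']$ at all; it is purely a Floer-localization argument. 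Your approach inverts the logical order --- trying to get the flat-norm bound first and then both Hausdorff bounds from it --- and that inversion is exactly where the argument breaks. The remainder of your outline (monotonicity, slicing, tilt excess) is in the right spirit, and your slicing computation $r^{n+1}\lesssim\mathbf M(S)$ does produce the exponent $1/4n^2$, but it rests on a filling bound you cannot obtain without first running the pointwise Floer argument above.
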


A few explanations may clarify the significance of the assumptions:

\begin{itemize}
\item If $L,L'$ are isomorphic nonzero objects in the derived Fukaya category, then $HF^0(L,L')\neq 0$, $HF^0(L',L)\neq 0$ and $[L]=[L']\in H_n(X, \Q)$.

\item The condition $\norm{\theta_{L'}}_{C^0}\leq \frac{\pi}{2}-\epsilon_0$ is called \textbf{quantitative almost calibratedness}, important in the Thomas-Yau program (\cf \cite{ThomasYau}\cite{Li}). Obviously, this is weaker than assuming the $C^0$-norm on $\theta_{L'}$ to be small. One of its immediate consequences is the a priori homological mass bound
\[
\text{Vol}(L')=\int_{L'} dvol \leq \frac{1}{\sin \epsilon_0} \int_{L'} \text{Re}\Omega .
\]

\item The weak $L^1$-type bound $\text{Vol}(\{ \theta_{L'}>\epsilon \})\ll \epsilon^n$
 is implied by $\norm{\theta_{L'} }_{L^1(L')} \ll \epsilon^{n+1} $. This is again weaker than the $C^0$-smallness of $\theta_{L'}$. This weakening is attractive because if a sequence $L_i'$ of quantitatively almost calibrated Lagrangians converge to a special Lagrangian in the weak topology of varifolds (\ie as Radon measures on the Grassmannian bundle), then $\norm{\theta_{L'_i} }_{L^1}\to 0$ as $i\to \infty$, but $\norm{\theta_{L'_i}}_{C^0}$ does not necessarily converge to zero.
\end{itemize}

Theorem \ref{main} can be interpreted as the quantitative version of a 
\textbf{strong-weak uniqueness} theorem. 

\begin{Def}
Given a sequence of smooth embedded unobstructed Lagrangian branes $L_i$ which lie in a fixed nonzero derived Fukaya category class (so in particular the same rational homology class), and are all quantitatively almost calibrated. We say a Lagrangian $C^2$-submanifold (resp. Lagrangian integral current) $L_\infty$ is a \textbf{$C^2$-limit} (resp. \textbf{varifold limit}) if $L_i$ converge to $L_\infty$ in the $C^2$-topology (resp. in both the weak topology of varifolds and the flat norm topology of integral currents). Notice there is no requirement that $L_\infty$ is equipped with any brane structure. 
\end{Def}

\begin{cor}(Strong-weak uniqueness)\label{strongweak} Fix a derived Fukaya category class, and let $L_\infty$ be a $C^2$-limit and $L_\infty'$ a varifold limit. Assume both are special Lagrangian $\theta_{L_\infty}=\theta_{L'_\infty}=0$, then $L_\infty=L_\infty'$ as integral currents.
\end{cor}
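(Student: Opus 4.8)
The plan is to derive the Corollary from the quantitative estimate of Theorem~\ref{main}, applied along the two approximating sequences, together with a diagonal choice of the smallness parameter. Write $L_i$ for the smooth unobstructed branes converging to $L_\infty$ in $C^2$, and $L_i'$ for the branes converging to $L_\infty'$ both in the weak varifold topology and in the flat norm. Since all the $L_i$ and all the $L_i'$ lie in one fixed nonzero derived Fukaya category class, they are pairwise isomorphic objects, so $HF^0(L_i,L_i')\neq 0$ and $[L_i]=[L_i']$ in $H_n(X,\Q)$, and they are all quantitatively almost calibrated with a single constant $\epsilon_0$, that is $\norm{\theta_{L_i'}}_{C^0}\leq\frac{\pi}{2}-\epsilon_0$. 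Because $L_i\to L_\infty$ in $C^2$ and $\theta_{L_\infty}=0$, the Lagrangian angles converge in $C^0$, so $\epsilon_i:=\norm{\theta_{L_i}}_{C^0}\to 0$; moreover the $C^2$-regularity bounds of the $L_i$ converge to those of $L_\infty$ and hence are uniformly bounded, so the constant $C$ produced by Theorem~\ref{main}---which depends only on $\epsilon_0$, $X$ and the $C^2$-bounds of the first Lagrangian---can be chosen independent of $i$. Two points then remain: to verify the weak $L^1$-type hypothesis on $\theta_{L_i'}$, and to assemble from the two scales a single admissible $\epsilon$.

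For the integral smallness of $\theta_{L_i'}$, the key observation is that $\int_{L_i'}\text{Re}\,\Omega$ is a topological quantity: $\text{Re}\,\Omega$ is closed and all the $L_i'$ and $L_\infty'$ represent the same homology class, so $\int_{L_i'}\text{Re}\,\Omega=\int_{L_\infty'}\text{Re}\,\Omega=\int_{L_\infty'}e^{-\rho}\,dvol$, the last equality because $L_\infty'$ is special Lagrangian. On the other hand $e^{-\rho}$ is a fixed continuous function on $X$, so its pullback to the Grassmannian bundle is a legitimate test function for varifolds, and weak varifold convergence gives $\int_{L_i'}e^{-\rho}\,dvol\to\int_{L_\infty'}e^{-\rho}\,dvol$. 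Subtracting the two displays and using $\text{Re}\,\Omega|_{L_i'}=e^{-\rho}\cos\theta_{L_i'}\,dvol$ yields $\int_{L_i'}(1-\cos\theta_{L_i'})\,e^{-\rho}\,dvol\to 0$. Since $e^{-\rho}$ is bounded below on the compact $X$ and $1-\cos\theta$ is comparable to $\theta^2$ on the bounded range $|\theta_{L_i'}|\leq\frac{\pi}{2}-\epsilon_0$, this forces $\eta_i:=\int_{L_i'}\theta_{L_i'}^2\,dvol\to 0$, whence by Chebyshev $\text{Vol}(\{\theta_{L_i'}>t\})\leq\eta_i/t^2$ for every $t>0$.

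I would then set $\epsilon(i):=\max\bigl(\epsilon_i,\,\eta_i^{1/(n+3)}\bigr)$. Then $\epsilon(i)\to 0$, $\norm{\theta_{L_i}}_{C^0}\leq\epsilon(i)$, and, since $\eta_i\leq\epsilon(i)^{n+3}$, $\text{Vol}(\{\theta_{L_i'}>\epsilon(i)\})\leq\eta_i/\epsilon(i)^2\leq\epsilon(i)^{n+1}=o(\epsilon(i)^n)$; so for $i$ large all the hypotheses of Theorem~\ref{main} are satisfied by the pair $(L_i,L_i')$ with parameter $\epsilon(i)$, and the theorem bounds the flat norm distance between $L_i$ and $L_i'$ by $C\,\epsilon(i)^{1/(4n)+1/(4n^2)}\to 0$, with $C$ independent of $i$. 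As $L_i\to L_\infty$ in the flat norm ($C^1$-convergence of embedded submanifolds in a fixed homology class already implies flat convergence of the associated integral currents) and $L_i'\to L_\infty'$ in the flat norm by the definition of varifold limit, the triangle inequality for the flat norm forces the flat norm distance between $L_\infty$ and $L_\infty'$ to vanish, \ie $L_\infty=L_\infty'$ as integral currents. The substance of the argument is entirely inside Theorem~\ref{main}; the main point requiring care at this level is the conversion of weak varifold convergence into the weak $L^1$-smallness of the Lagrangian angle, which hinges on the cohomological invariance of $\int\text{Re}\,\Omega$ and on testing the varifold convergence against the weight $e^{-\rho}$, while the rest is bookkeeping of the two smallness scales.
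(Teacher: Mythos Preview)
Your proof is correct and follows the same overall approach as the paper: apply Theorem~\ref{main} to the approximating pairs $(L_i,L_i')$ with a suitable small parameter and pass to the limit in the flat norm via the triangle inequality. The only noteworthy differences are cosmetic: you verify the $L^1$-type smallness of $\theta_{L_i'}$ through the cohomological invariance of $\int\text{Re}\,\Omega$ combined with testing the varifold against $e^{-\rho}$, whereas the paper simply tests the continuous function $|\theta|$ on the Grassmannian bundle directly; and you package the two smallness scales into a single diagonal parameter $\epsilon(i)$, while the paper fixes $\epsilon$, lets $i\to\infty$, and then sends $\epsilon\to 0$.
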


\begin{proof}
From the $C^2$-limit assumption, $L_i\to L_\infty$ with $\norm{\theta}_{C^0}\to 0$. From the varifold limit assumption, $L_i'\to L$ with $\norm{\theta}_{L^1}\to 0$. All Lagrangians lie in the same homology class, and all the approximants $L_i, L_i'$ lie in the same derived category class. Thus Theorem \ref{main} applies to $L_i, L_i'$ for $i\gg 1$ and arbitrarily small $\epsilon$, and the uniform $C^2$-regularity on $L_i$ means that all the constants are uniform. The flat norm distance between $L_i, L_i'$ is bounded by $C\epsilon^{ \frac{1}{4n}+ \frac{1}{4n^2}  }$. Taking the limit as $i\to +\infty$, the flat norm distance between $L_\infty$ and $L_\infty'$ is bounded by $C\epsilon^{ \frac{1}{4n}+ \frac{1}{4n^2}  }$, and letting $\epsilon\to 0$ gives $L_\infty=L_\infty'$. 
\end{proof}

Provided a derived Fukaya category class admits a special Lagrangian $C^2$-limit, we have a satisfactory characterization of varifold topology convergence in terms of the Lagrangian angle function.

\begin{cor}
Fix a derived Fukaya category class, and assume there is a sequence of smooth embedded representatives $L_i$, converging in $C^2$ topology to a special Lagrangian $L_\infty$. Let $L_i'$ be a sequence of quantitatively almost calibrated Lagrangian branes in the same derived Fukaya category class. Then $L_i'$ converges in the varifold topology to $L_\infty$ if and only if $\norm{\theta_{L_i'}}_{L^1}\to 0$.

\end{cor}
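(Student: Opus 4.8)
The plan is to deduce the corollary from the strong-weak uniqueness Corollary \ref{strongweak} in one direction, and from Theorem \ref{main} together with standard compactness in the other. Write $V_i$ for the integral varifold determined by $L_i'$.

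\textbf{The "only if" direction.} Suppose $L_i'$ converges to $L_\infty$ as varifolds. Since all $L_i'$ lie in a fixed nonzero derived Fukaya category class, they share a fixed rational homology class, hence a fixed homological mass $\int_{L_i'}\text{Re}\,\Omega$; combined with quantitative almost calibratedness $\norm{\theta_{L_i'}}_{C^0}\le \frac{\pi}{2}-\epsilon_0$ this yields a uniform mass bound $\text{Vol}(L_i')\le \frac{1}{\sin\epsilon_0}\int_{L_i'}\text{Re}\,\Omega = M$. Varifold convergence $V_i\to V_\infty$ with $V_\infty$ the special Lagrangian $L_\infty$ means in particular $\text{Vol}(L_i')\to \text{Vol}(L_\infty)$ and, pairing against continuous functions on the Grassmannian bundle, the measures $L_i'$-with-tangent-plane-data converge weakly to $L_\infty$-with-tangent-plane-data. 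Now write $\int_{L_i'}(1-\cos\theta_{L_i'})\,dvol = \text{Vol}(L_i') - \int_{L_i'}\text{Re}\,\Omega$, and observe that $\int_{L_i'}\text{Re}\,\Omega$ is a homological quantity that equals $\int_{L_\infty}\text{Re}\,\Omega = \text{Vol}(L_\infty)$ for all $i$. Hence $\int_{L_i'}(1-\cos\theta_{L_i'})\,dvol = \text{Vol}(L_i')-\text{Vol}(L_\infty)\to 0$. Since $\norm{\theta_{L_i'}}_{C^0}\le \frac{\pi}{2}-\epsilon_0$, on the region where $|\theta_{L_i'}|\le \frac{\pi}{2}-\epsilon_0$ we have $1-\cos\theta \ge c(\epsilon_0)\theta^2 \ge c(\epsilon_0)|\theta|\cdot\sin\epsilon_0^{-1}$... more simply, $|\theta|\le \frac{\pi}{2}$ implies $1-\cos\theta\ge \frac{2}{\pi^2}\theta^2$ and also $|\theta|\le \frac{\pi}{2-2\epsilon_0/\pi}$ bounded, so $\theta^2\ge c(\epsilon_0)|\theta|$; therefore $\norm{\theta_{L_i'}}_{L^1}\le C(\epsilon_0)\int_{L_i'}(1-\cos\theta_{L_i'})\,dvol\to 0$.

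\textbf{The "if" direction.} Suppose $\norm{\theta_{L_i'}}_{L^1(L_i')}\to 0$. Then in particular $\text{Vol}(\{\theta_{L_i'}>\epsilon\})\le \epsilon^{-1}\norm{\theta_{L_i'}}_{L^1}\ll \epsilon^n$ once $i$ is large, so for each fixed small $\epsilon$ the hypotheses of Theorem \ref{main} are met by the pair $(L_i, L_i')$ for all $i$ large (the $C^0$-smallness of $\theta_{L_i}$ coming from $C^2$-convergence to the special Lagrangian $L_\infty$, the almost calibratedness of $L_i'$ being assumed, and all Lagrangians lying in the fixed homology/derived class). The uniform $C^2$-bounds on $L_i$ make all constants in Theorem \ref{main} uniform in $i$, so the flat norm distance $\mathcal{F}(L_i, L_i')\le C\epsilon^{\frac{1}{4n}+\frac{1}{4n^2}}$ and the $F$-metric distance is $\le C\epsilon^{\frac{1}{8n}}$. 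Since $L_i\to L_\infty$ in $C^2$, hence in flat norm and as varifolds, a diagonal/triangle-inequality argument (first let $i\to\infty$ holding $\epsilon$ fixed, then let $\epsilon\to 0$) shows $\mathcal{F}(L_i', L_\infty)\to 0$ and the $F$-distance of $L_i'$ to $L_\infty$ tends to $0$, i.e. $L_i'\to L_\infty$ in the varifold topology (the $F$-metric metrizes weak convergence of varifolds of uniformly bounded mass).

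\textbf{Main obstacle.} The routine points are the elementary convexity inequality relating $1-\cos\theta$ and $|\theta|$ under the almost-calibrated bound, and the bookkeeping that $\int_{L'}\text{Re}\,\Omega$ is homological. The genuinely delicate step is the "only if" direction's use of $\text{Vol}(L_i')\to \text{Vol}(L_\infty)$: varifold convergence gives lower semicontinuity of mass for free, but the needed upper bound — no mass is lost, e.g. to a lower-dimensional set or via cancellation — must be extracted from the definition of varifold convergence as weak-* convergence of the associated Radon measures on the Grassmannian bundle (testing against the function $\equiv 1$ directly yields $\text{Vol}(L_i')\to \text{Vol}(V_\infty)$), together with the identification $V_\infty = L_\infty$ as an integral varifold with multiplicity one, which is part of the hypothesis. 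One should also double-check that the $F$-metric does metrize the varifold topology on the relevant mass-bounded family, a standard fact (\cf Pitts, Almgren), so that "$F$-distance $\to 0$" and "varifold convergence" are interchangeable in the statement.
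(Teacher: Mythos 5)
Your ``if'' direction is correct and is essentially the paper's argument: Markov's inequality converts $\norm{\theta_{L_i'}}_{L^1}\to 0$ into the measure-smallness hypothesis $\text{Vol}(\{\theta_{L_i'}>\epsilon\})\ll\epsilon^n$ for each fixed $\epsilon$ and large $i$; Theorem \ref{main} then gives the $F$-metric bound; $C^2$-convergence $L_i\to L_\infty$ and the fact that the $F$-metric metrizes varifold convergence on mass-bounded families finish it.

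Your ``only if'' direction takes a genuinely different route from the paper, and as written it contains two errors. The paper's argument is one line: the Lagrangian angle $|\theta|$, restricted to the compact set $\{|\theta|\le\tfrac{\pi}{2}-\epsilon_0\}$ of the Grassmannian bundle in which all tangent planes of the $L_i'$ lie, extends to a bounded continuous function on the whole Grassmannian bundle, so varifold convergence directly gives $\int_{L_i'}|\theta|\,dvol\to\int_{L_\infty}|\theta|\,dvol=0$. You instead route through a calibration-defect identity, which is a familiar technique but needs care here. First, the identity you write is wrong when the metric is not Calabi--Yau: since $\text{Re}\,\Omega|_{L'}=e^{-\rho}\cos\theta\,dvol_{L'}$, one has $\int_{L'}e^{-\rho}(1-\cos\theta)\,dvol=\int_{L'}e^{-\rho}\,dvol-\int_{L'}\text{Re}\,\Omega$, not $\text{Vol}(L')-\int_{L'}\text{Re}\,\Omega$; you then need to apply varifold convergence to the continuous weight $e^{-\rho}$ as well. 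Second, and more seriously, the claimed pointwise bound $\theta^2\ge c(\epsilon_0)|\theta|$ is false: as $\theta\to 0$ the left side is quadratically small while the right side is only linearly small, and there is no lower bound on $|\theta|$. The passage from $\int(1-\cos\theta)\,dvol\to 0$ to $\norm{\theta}_{L^1}\to 0$ must instead go through Cauchy--Schwarz: the elementary inequality $1-\cos\theta\ge c\,\theta^2$ gives $\norm{\theta}_{L^2}\to 0$, and the uniform mass bound then yields $\norm{\theta}_{L^1}\le\text{Vol}(L_i')^{1/2}\norm{\theta}_{L^2}\to 0$. With those two fixes your argument is valid, but it is considerably longer than simply testing the varifold convergence against $|\theta|$ as the paper does; the ``main obstacle'' you identify (mass convergence) is automatic from weak convergence of Radon measures against the function $1$, as you yourself note at the end.
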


\begin{proof}
Since the Lagrangian angle is a continuous function on the open subset $\{ \text{Re}\Omega>0  \}$ in the Grassmannian bundle, the weak topology convergence of $L_i$ implies 
\[
\int_{L_i} |\theta| dvol_{L_i}\to \int_{L_\infty} |\theta| dvol =0. 
\]
Conversely, if $\norm{\theta_{L_i'}}_{L^1}\to 0$, then Theorem \ref{main} applies to $L_i$ and $L_i'$ for $i\gg 1$ and arbitrarily small $\epsilon$, whence the $F$-metric between $L_i$ and $L_i'$ is bounded by $C\epsilon^{1/8n}$ for $i\gg 1$ depending on $\epsilon$. In the $i\to +\infty$ limit, the $F$-metric distance between $L_i'$ and $L_\infty$ tends to zero. Since all Lagrangians have uniform mass bounds, the $F$-metric induces the same topology as the weak topology on varifolds, whence $L_i'\to L_\infty$ in the varifold topology.
\end{proof}

\begin{rmk}
Since in our setting $\norm{\theta}_{L^\infty}$ and the mass of the Lagrangian both have uniform bounds, by interpolation $\norm{\theta}_{L^1}\to 0$ is equivalent to $\norm{\theta}_{L^p}\to 0$ for any fixed $p>0$.
\end{rmk}

\subsection{Open questions}

We raise some natural questions:

\begin{Question}
With the same assumptions on $L, L'$ as in Thm \ref{main},  what would be the \emph{sharp exponent} for the Hausdorff distance bound in terms of $\epsilon$? 
\end{Question}

This question has many variants. For instance, one can replace $\text{Vol}(\{ \theta_{L'}>\epsilon  \})\leq \epsilon^n$ with the smallness of  $\norm{\theta_{L'}}_{L^p(L')}$ (resp. $\norm{\theta_{L'}}_{C^0(L')}$). We expect the exponents of $\epsilon$ in Theorem \ref{main} to be not optimal. On the other hand, even in the simple setting of Calabi-Yau metrics, assuming $L$ is a fixed smooth special Lagrangian, and $\norm{\theta_{L'}}_{L^1}<\epsilon$, we do not expect that the Hausdorff distance to be uniformly bounded by $O(\epsilon)$. The reason is that the linearization of the special Lagrangian equation is the Poisson equation $\Lap u= f$, and the failure of the Sobolev embedding $W^{2,1}\to C^1$ means we cannot expect $u$ to have $C^1$-bounds proportional to $\norm{f}_{L^1}$. Some nonlinear effects are necessary for a result like Thm. \ref{main}.

\begin{Question}
Do the uniform constants really depend on the $C^2$-regularity bounds of $L$? 
\end{Question}

The part of our arguments most sensitive to $C^2$-regularity bounds is Lemma \ref{smallC0density}, which involves solving the Poisson equation on $L$ with bounds. If $L$ has some mild degeneration, eg. when it is approximately the desingularization of some special Lagrangian with certain isolated conical singularities, then we expect results similar to Theorem \ref{main} would hold, with possibly different exponents for $\epsilon$. On the other hand, it is not clear if we can drop all regularity bounds on $L$ beyond the information of the homology class and the Lagrangian angles. A closely related question is whether \emph{strong uniqueness} holds:

\begin{Question}
Consider the variant of Cor. \ref{strongweak} where both $L_\infty$ and $L_\infty'$ are varifold limits. Assume both are special Lagrangian $\theta_{L_\infty}=\theta_{L'_\infty}=0$, then do they coincide as integral currents?
\end{Question}

As we will indicate in a series of remarks, the embedded assumption on the Lagrangians in Theorem \ref{main} can be relaxed to immersed Lagrangians with \emph{connected domains} and transverse self intersection points, with rather minor changes. The connected domain assumption is quite crucial to our arguments here (\cf Remark \ref{connecteddomain}), and also appears in the literature on Thomas-Yau uniqueness for immersed Lagrangians \cite{Imagi}. On the other hand, the hypothesis of Theorem \ref{main} is a little weaker than assuming $L,L'$ to lie in the same derived Fukaya category class, and does not make explicit use of holomorphic curves.

\begin{Question}
If $L,L'$ are immersed Lagrangians defining the same object in the derived Fukaya category, not necessarily with connected domains, then is there still an analogue of the quantitative Thomas-Yau uniqueness theorem? Is there a corresponding strong-weak uniqueness theorem?	
\end{Question}

\section{Proof of quantitative Thomas-Yau uniqueness}

Throughout this section,
let $L, L'$ be two compact, smoothly embedded Lagrangians with unobstructed brane structures, such that $HF^0(L, L')\neq 0$ or $HF^0(L',L)\neq 0$.  All constants $C$ depend only on $X$ and the $C^2$-regularity bounds of $L$, but not on the regularity bounds for $L'$.

\subsection{Small $\norm{\theta}_{C^0}$ case}

In the special case where $\norm{\theta}_{C^0}\ll 1$ is small, we show that the set of points on $L$ lying near $L'$ is sufficiently dense. This argument contains the Floer theoretic ingredient in our strategy. We do not yet impose $[L]=[L']\in H_n(X)$.

\begin{lem}\label{smallC0density}
Assume $\norm{\theta}_{C^0}\leq \epsilon\ll 1$ for both $L, L'$. 
For all $p\in L$, there is some $p'\in L\cap B(C_1\epsilon^{1/2n})$, such that $\text{dist}(p', L')\leq C\epsilon^{\frac{n+1}{2n}  }	 $. In particular $\text{dist}(p,L')\leq C\epsilon^{ 1/2n  }$.
\end{lem}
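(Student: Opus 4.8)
The plan is to exploit the Floer-theoretic hypothesis $HF^0(L,L')\neq 0$ (or $HF^0(L',L)\neq 0$) to force the existence of a degree-zero intersection point after a *controlled* Hamiltonian perturbation, and then convert the near-intersection into a pointwise distance estimate via a quantitative local analysis of the Lagrangian angle near such a point. First I would fix $p\in L$ and work inside a Darboux ball $B=B(p, C_1\epsilon^{1/2n})$ of radius $r=C_1\epsilon^{1/2n}$ centered at $p$, chosen so that the $C^2$-bounds on $L$ make $L\cap B$ a graph over $T_pL$ with small $C^1$-norm (this is where the dependence on the $C^2$-regularity of $L$ enters, via solving a Poisson equation to produce a compactly supported Hamiltonian isotopy localized in $B$). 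The point is that if $L\cap B$ stays uniformly far (distance $\gg C\epsilon^{(n+1)/2n}$) from $L'$ everywhere in $B$, I want to derive a contradiction with $HF^0\neq 0$.

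The key steps, in order: (1) By a small Hamiltonian perturbation of $L$ supported in $B$ — of size controlled by $\epsilon$ and the geometry of $L$ in $B$ — arrange that $L$ and $L'$ intersect transversely, while changing the Lagrangian angle $\theta_L$ only by $O(\epsilon)$, so that after perturbation both angles are still $\ll 1$ in $C^0$; this preserves $HF^0\neq 0$ since the perturbation is Hamiltonian and $C^\infty$-small. (2) Since $\|\theta_L\|_{C^0},\|\theta_{L'}\|_{C^0}\ll 1$, at every transverse intersection point $q\in L\cap L'$ the angles $\alpha_i$ in the normal form satisfy $0<\alpha_i<\pi$, and the Floer degree formula \eqref{degree} gives $\mu(q)=\frac1\pi(\sum\alpha_i - \theta_{L'}(q)+\theta_L(q))$; because $\sum\alpha_i\in(0,n\pi)$ and the angle terms are $O(\epsilon)$, the degree is pinned into $\{1,\dots,n-1\}$ *unless* $\sum_i\alpha_i$ is close to $0$ or to $n\pi$ — i.e. unless $T_qL$ and $T_qL'$ are nearly equal (all $\alpha_i\approx 0$) or nearly "opposite". (3) The nonvanishing of $HF^0$, hence of $HF^0$ after perturbation, forces the existence of some intersection point $q_0$ of degree $0$; by step (2) this means all $\alpha_i(q_0)$ are small, so $T_{q_0}L$ is $C^0$-close to $T_{q_0}L'$ and $q_0$ is a point of $L$ *on* $L'$. (4) Now unwind the perturbation: $q_0$ was produced by moving $L$ by a Hamiltonian isotopy of size $O(\epsilon^{?})$ inside $B$, so the original $L$ has a point $p'\in L\cap B$ with $\mathrm{dist}(p',L')\le C\epsilon^{(n+1)/2n}$. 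Finally, since $p'\in B(p,C_1\epsilon^{1/2n})$, the triangle inequality gives $\mathrm{dist}(p,L')\le \mathrm{dist}(p,p')+\mathrm{dist}(p',L')\le C\epsilon^{1/2n}$.

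The delicate points are two. The first is the *degree-zero extraction*: $HF^0\neq 0$ only guarantees that the Floer differential has nontrivial degree-zero cohomology globally, not that there is a degree-zero *generator* near the prescribed $p$. To localize it I expect one must perturb $L$ only inside the small ball $B$ and argue that *some* degree-zero intersection point must be created or survive there — plausibly by a counting/Poincaré-duality argument (as in the outline of Theorem \ref{ThomasYauuniqueness}, the unit of the Floer cohomology ring pairs nontrivially with the point class), combined with the fact that outside $B$ one can keep $L,L'$ disjoint or control all intersections' degrees to be nonzero. This is where the factor $\epsilon^{1/2n}$ for the ball radius and $\epsilon^{(n+1)/2n}$ for the distance should emerge: the volume of $L'\cap B$ is $O(r^n)=O(\epsilon^{1/2})$, and the "almost calibrated + small angle" condition on $L'$ (weak $L^1$-bound) forces $L'$ to look nearly flat and nearly parallel to $T_pL$ on most of $B$, so a transverse intersection with the perturbed graph must occur within distance $\sim \epsilon^{(n+1)/2n}$ of $L$. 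The second delicate point is ensuring the Hamiltonian perturbation needed for transversality has size compatible with keeping $\|\theta\|_{C^0}\ll1$ and with the claimed exponents — this is exactly the Poisson-equation solvability on $L$ with $C^2$-bounds that the paper flags as the step most sensitive to the regularity of $L$. I expect the degree-zero extraction to be the main obstacle; the rest is controlled linear PDE and elementary Lagrangian linear algebra.
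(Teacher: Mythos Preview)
Your overall strategy---Hamiltonian-perturb $L$, invoke $HF^0\neq 0$ to force a degree-zero intersection, then convert to a distance bound---matches the paper. But the localization mechanism you propose is backwards, and this is a genuine gap, not just a detail to fill in.

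You want to perturb $L$ \emph{only inside} the small ball $B=B(p,C_1\epsilon^{1/2n})$ and then argue that a degree-zero intersection must occur there. This cannot work as stated: a perturbation supported in $B$ does nothing to the intersection pattern of $L$ and $L'$ outside $B$, and you have no control over those intersections---there is no reason $L$ and $L'$ should be disjoint outside $B$, and no reason the pre-existing intersections there should all have nonzero degree. The Poincar\'e-duality heuristic you mention does not localize the unit class to a prescribed ball. The paper's idea is the opposite: perturb $L$ \emph{globally} so as to \emph{raise} its Lagrangian angle above $\theta_{L'}$ everywhere outside $B(p,\eta)$. Concretely, one solves the weighted Poisson equation $\mathcal{L}(h)=f$ on $L$ with $f\equiv 4$ outside $B(p,\eta)$ (and a compensating bump of size $O(\eta^{-n})$ inside, to make $\int e^{-\rho}f=0$); the time-$\epsilon$ Hamiltonian flow then shifts $\theta_L$ by approximately $\epsilon f$, giving $\theta_{\varphi_\epsilon(L)}\geq 2\epsilon>\epsilon\geq \theta_{L'}$ on $\varphi_\epsilon(L\setminus B(p,\eta))$. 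The degree formula lower bound $\mu(q)\geq \tfrac{1}{\pi}(\theta_{\varphi_\epsilon(L)}(q)-\theta_{L'}(q))>0$ then rules out degree-zero intersections outside $B(p,\eta)$ entirely, so $HF^0\neq 0$ forces one inside.

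This also explains the exponents, which your proposal leaves as $\epsilon^{?}$. The elliptic estimate gives $|dh|+\eta|\nabla dh|\leq C\eta^{1-n}$; the displacement of the flow is $O(\epsilon|dh|)=O(\epsilon\eta^{1-n})$, and the second-order error in the angle is $O((\epsilon\eta^{-n})^2)$. Choosing $\eta=C_1\epsilon^{1/2n}$ makes the angle error $\ll\epsilon$ (so the ``$+4\epsilon$'' shift survives) and yields displacement $O(\epsilon^{(n+1)/2n})$, which is exactly the claimed $\text{dist}(p',L')$. Your volume-of-$L'\cap B$ heuristic for these exponents is not the mechanism; no regularity or flatness of $L'$ is used in this lemma.
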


\begin{proof}
We perform a Hamiltonian perturbation of the $L$ within the Weinstein tubular neighbourhood $T^*L$ of the Lagrangian $L$.
For a smooth Hamiltonian function $h: X\to \R$, the time $\epsilon$ flow sends $L$ to $\varphi_\epsilon(L)$ whose Lagrangian angle $\theta_{\varphi_\epsilon(L)}$ satisfies
\[
|\theta_{\varphi_\epsilon(L)}- \theta_L -  \epsilon \mathcal{L}(h) |\leq  C\epsilon^2(\norm{dh}_{C^1}^2+  \norm{dh}_{C^1}),
\]
where the linear operator $\mathcal{L}$ is the weighted Laplacian prescribed by
\[
\mathcal{L} (h) e^{-\rho}dvol =  d(e^{-\rho}*_L dh),
\]
which in the Calabi-Yau metric case ($\rho=0$) reduces to the Laplacian.

Given any point $p\in L$, we prescribe a smooth function $f:L\to \R$, such that $f= 4$ outside  $B(p, \eta)$ for some $\eta\ll 1$ to be determined, the weighted integral $\int_L e^{-\rho} fdvol_L=0$, and $|f|+ | df|\eta \leq C \eta^{-n}$ inside $B(p,\eta)$. We solve $\mathcal{L}(h)=f$ with $\int_L he^{-\rho}dvol_L=0$, to find the smooth function $h$ with bound
$
|dh|+ \eta|\nabla dh| \leq C \eta^{1-n}.
$
Using the smoothness of $L$, we can extend $h$ to a smooth function on $X$ with the same bounds.

We choose $\eta=C_1 \epsilon^{1/2n}$ for some large constant $C_1$ independent of $\epsilon$. Then the dispacement of the time $\epsilon$-flow is bounded by $C\epsilon|dh|\leq C\epsilon \eta^{1-n}\leq C \epsilon^{ \frac{n+1}{2n} } \ll 1$, so $\varphi_\epsilon(L)$ is well defined inside the Weinstein neighbourhood. Furthermore, 
\[
|\theta_{\varphi_\epsilon(L)}- \theta_L -  \epsilon\mathcal{L}(h) |\leq C\epsilon^2(\norm{dh}_{C^1}^2+  \norm{dh}_{C^1})\leq  C( \epsilon \eta^{-n}  )^2 \ll \epsilon,
\]
whence on $\varphi_\epsilon( L\setminus B(p, \eta)  )$, 
\[
\theta_{\varphi_\epsilon(L)} \geq \theta_L+ \epsilon\mathcal{L}(h)- \epsilon \geq \epsilon\mathcal{L}(h)- 2\epsilon \geq 4\epsilon- 2\epsilon \geq 2\epsilon. 
\]
Without loss $\varphi_\epsilon(L)$ is transverse to $L'$ by genericity.
By formula (\ref{degree}),
any Lagrangian intersection $q\in CF^*(\varphi_\epsilon(L),L')$ has Floer degree 
\[
\mu(q)\geq \frac{1}{\pi}( \theta_{\varphi_\epsilon(L)}- \theta_{L'} )\geq \frac{1}{\pi}\epsilon,
\]
 hence degree zero intersections cannot occur on $\varphi_\epsilon( L\setminus B(p, \eta)  )$.
By the Hamiltonian invariance of Floer cohomology \[
HF^0(\varphi_\epsilon(L), L')=HF^0(L, L')\neq 0.
\]
This forces there to be an intersection $q\in \varphi_\epsilon( L\cap B(p, \eta)  )\cap L'$. Thus there is $p'\in L\cap B(p,\eta)$ whose distance to $q$ is less than $C\epsilon|dh|\leq C\epsilon^{ \frac{n+1}{2n}  }$.
\end{proof}

\begin{rmk}\label{connecteddomain}
If we replace the embedded Lagrangians by immersed Lagrangians with connected domain and transverse self intersections, we can still use the Floer cohomology of Akaho-Joyce \cite{AkahoJoyce}. The main caveat is that in order for the Hamiltonian function $h: L\to \R$ to extend to $X$, we need $h$ to take the same value at the finitely many self intersection points. These finitely many linear constraints are easy to meet, by relaxing $f=4$ outside $B(p,\eta)$ to the more flexible condition $f\geq 4$  outside $B(p,\eta)$.

The connected domain hypothesis on $L$ is important for solving the equation $\mathcal{L}(h)=f$. If we drop this hypothesis, then take for instance $L$ the disjoint union of two special Lagrangians, and $L'$ to be one of the components, and the lemma would be false.
\end{rmk}

\begin{rmk}
Lemma \ref{smallC0density} easily gives a \emph{new proof} to the original Thomas-Yau uniqueness theorem \ref{ThomasYauuniqueness}, not relying on Morse theory or real analyticity. Indeed, by taking the $\epsilon\to 0$ limit, we deduce that each point on $L$ has zero distance to $L'$, hence $L\subset L'$. Since the roles of $L,L'$ are symmetric in theorem \ref{ThomasYauuniqueness}, we recover $L=L'$. The rest of this paper treats the additional difficulties caused by giving up quantitative regularity bounds on $L'$, and by relaxing the $C^0$-smallness of $\theta_{L'}$ to merely smallness on most of the measure.
\end{rmk}

\subsection{Generic perturbation technique}

The technique of controlling the number of intersection points by utilizing a sufficiently rich family of perturbations, originated from Arnold \cite{Arnold}, who used it to study the dynamical growth of intersection points.  Prop. \ref{Arnold} is a detailed exposition to clarify the dependence of constants. Seidel \cite[Lecture 5]{Seidel} contains an application to bound the rank of Lagrangian Floer cohomology.

Let $M$ be an $m$-dimensional compact manifold, and $N, N'$ be two compact submanifolds of complementary dimension, and let $U\subset N'$ be an open subset. 
Let $V$ be a $p$-dimensional space of vector fields on $M$, which is surjective to the normal space $TM/TN$ at every point of $N$. Fix a Euclidean metric on $V$, and consider the $p$-dimensional family of diffeomorphism $\phi_t$ of $M$ obtained by exponentiating the vector fields in $P=B(0,\epsilon)\subset V$ for some small  $\epsilon$.  This induces a $p$-parameter deformation family 
$\mathcal{N}\to P$ for $N$, whose fibres are $\phi_t(N)$ for $t\in P$. There is an evaluation map $\pi: \mathcal{N}\to M$, which by construction is surjective on tangent spaces.

\begin{prop}\label{Arnold}
There exists some $t\in P$, such that $\phi_t(N)$ intersects $N'$ transversely, and the number of intersection points 
\[
|\phi_t(N)\cap U| \leq C\epsilon^{-n} \text{Vol}(U), 
\]	
where the constant $C$ depends only on $M$, $V$, and the $C^1$-regularity of $N$, but not on the small $\epsilon$ and the regularity bounds of $N'$.
\end{prop}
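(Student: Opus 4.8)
The plan is to prove Proposition \ref{Arnold} by an averaging/Sard-type argument over the parameter space $P$, exploiting the fact that the evaluation map $\pi: \mathcal{N} \to M$ is a submersion with controlled derivatives. First I would set up the fibre product $Z = \mathcal{N} \times_M N' \subset \mathcal{N}$, namely the set of pairs $(t, x)$ with $x \in \phi_t(N) \cap N'$; since $\pi$ is a submersion and $N'$ has complementary dimension to $N$, the space $Z$ is a smooth $p$-dimensional manifold (independent of the regularity of $N'$ only through the submersion transversality, which is automatic here because $\pi$ is surjective on \emph{all} tangent spaces). The projection $\mathrm{pr}: Z \to P$ is then a map between $p$-manifolds, and by Sard's theorem a.e. $t \in P$ is a regular value; for such $t$ the fibre $\mathrm{pr}^{-1}(t)$ is exactly the (finite, transverse) intersection $\phi_t(N) \cap N'$. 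So transversality for generic $t$ is free; the real content is the \emph{counting bound}.

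For the counting bound, the key step is a change-of-variables/co-area estimate. I would restrict attention to the open piece $Z_U = \{(t,x) \in Z : x \in U\}$ and consider the map $E: Z_U \to M$, $(t,x) \mapsto x$ (the restriction of $\pi$), together with $\mathrm{pr}: Z_U \to P$. The number $|\phi_t(N) \cap U|$ is the cardinality of $\mathrm{pr}^{-1}(t)$, so $\int_P |\phi_t(N)\cap U|\, dt$ equals $\int_{Z_U} |\det D\mathrm{pr}|$ against the appropriate Jacobian factors, and I want to compare this to $\mathrm{Vol}(U)$ via the map $E$. The geometric input is that because $V$ is surjective onto $TM/TN$ along $N$ with a fixed lower bound on the "surjectivity constant" (depending on $V$ and the $C^1$-geometry of $N$), the fibres of $E: Z_U \to M$ over a point $x \in U$ are parametrized, to leading order, by the affine subspace of $t$'s that move $N$ through $x$ — a $(p-n)$-dimensional slice of $P = B(0,\epsilon)$, of measure $\lesssim \epsilon^{p-n}$ — while the $\mathrm{pr}$-image spreads over all of $P$ of measure $\sim \epsilon^p$. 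Dividing, the average of $|\phi_t(N)\cap U|$ over $t \in P$ is $\lesssim \epsilon^{-n}\mathrm{Vol}(U)$, and then some $t \in P$ (indeed most) achieves at most the average; intersecting with the full-measure set of regular values from the Sard step gives the desired $t$.

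The main obstacle I expect is making the Jacobian comparison uniform in the regularity of $N'$. The naive co-area computation produces Jacobian factors involving the angle between $\phi_t(N)$ and $N'$ at the intersection point, which a priori degenerates as that angle gets small, and controlling it would seem to need $C^1$-bounds on $N'$. The fix is to \emph{not} compute on $Z$ directly but on the ambient product: replace $Z$ by the full deformation total space $\mathcal{N}$ with its submersion $\pi: \mathcal{N} \to M$, push forward the Lebesgue measure of $P$ through the fibration $\mathcal{N} \to P$ to a measure on $\mathcal{N}$, note $\pi_* (\text{that measure})$ is absolutely continuous on $M$ with density bounded by $C\epsilon^{-n}$ (this is pure linear algebra about $V$ surjecting onto $TM/TN$, with $C$ depending only on $M$, $V$, and the $C^1$-geometry of $N$), and then observe
\[
\int_P |\phi_t(N)\cap U|\, dt = \int_{\mathcal{N}} \mathbf{1}_{\pi^{-1}(U) \cap (N' \text{-incidence})} = \int_U (\text{number of sheets of } \mathcal{N} \text{ over } x \text{ hitting } N') \, d(\pi_*\mu)(x) \le C\epsilon^{-n}\mathrm{Vol}(U),
\]
where the middle equality is just disintegrating $\mathcal{N}$ over $M$ and the incidence condition $x \in N'$ cuts out, in each sheet, a set of full or zero measure — so no angle/Jacobian of $N'$ ever enters. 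The only place $N'$ is used is the mild fact that $\pi^{-1}(N')$ meets $\mathcal{N}$ in a measurable (indeed, for generic $t$, smooth) set, which needs nothing beyond $N'$ being a submanifold. I would then combine: generic $t$ gives transversality (Sard), and $t$ below the average gives the count, so a generic $t$ does both. Checking that the family $V$ of exponentiated vector fields actually does make $\pi$ surjective on tangent spaces with the claimed uniform constant — a straightforward but slightly fiddly computation with the time-$\epsilon$ flow — is the remaining routine point.
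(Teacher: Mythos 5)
Your overall architecture matches the paper's: Sard applied to $\mathrm{pr}\colon Z\to P$ for transversality, an averaging of $|\phi_t(N)\cap U|$ over $t\in P$, and the fibre-volume bound $\mathrm{Vol}(\mathcal{N}\cap\pi^{-1}(y))\leq C\epsilon^{p-n}$ coming from the surjectivity of $V$ onto $TM/TN$ and the $C^1$-graph picture. The gap is in the ``fix.'' The incidence set $Z_U=\mathcal{N}\cap\pi^{-1}(U)$ has positive codimension $\dim N$ inside the total space $\mathcal{N}$, so it is a null set for any measure on $\mathcal{N}$ that is absolutely continuous with respect to the top-dimensional Hausdorff measure — in particular for your lifted Lebesgue-times-fibre measure. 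The middle equality in your display therefore produces $0$, not $\int_P|\phi_t(N)\cap U|\,dt$; the left-hand side is intrinsically a $p$-dimensional coarea integral over $Z_U$ and cannot be recast as an integral of an indicator function against the ambient measure on $\mathcal{N}$. Disintegrating over $M$ only becomes coherent once you place $\mathcal{H}^p$ on $Z_U$, which returns you to precisely the coarea computation you were trying to sidestep.

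Moreover, the worry that prompted the detour is a red herring: neither coarea step in the paper's argument sees the angle between $\phi_t(N)$ and $N'$. For $\mathrm{pr}\colon Z_U\to P$ one only needs the \emph{upper} bound $J\mathrm{pr}\leq C$, automatic because $\mathrm{pr}$ is the restriction of a projection. For $\pi|_{Z_U}\colon Z_U\to U$ one needs the lower bound $J(\pi|_{Z_U})\geq C^{-1}$, and this holds because the full $\pi$-fibre direction $\ker d\pi$ is contained in $TZ_U$ (the $\pi$-fibres lie entirely in $Z_U$), so $d(\pi|_{Z_U})$ has the same kernel as $d\pi$, and the Jacobian bound reduces to the uniform lower Jacobian bound for the submersion $\pi\colon\mathcal{N}\to M$ — a statement involving only $M$, $V$ and the $C^1$-geometry of $N$, with $N'$ never appearing. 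The clean chain is
\[
\int_P|\phi_t(N)\cap U|\,dt=\int_{Z_U}J\mathrm{pr}\,d\mathcal{H}^p\leq C\,\mathrm{Vol}(Z_U)\leq C\int_U\mathrm{Vol}\bigl(\mathcal{N}\cap\pi^{-1}(y)\bigr)\,dy\leq C\epsilon^{p-n}\mathrm{Vol}(U),
\]
followed by dividing by $\mathrm{Vol}(P)\sim\epsilon^p$ and intersecting with the full-measure set of Sard-regular $t$. This is exactly the paper's proof; your fibre-volume input is the real content and is correct — you should replace the measure-on-$\mathcal{N}$ reformulation with this direct two-step coarea estimate.
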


\begin{proof}
The transversality holds for a.e. $t\in P$ by Sard-Smale, so that $|\phi_t(N)\cap U|$ is a well defined integer for a.e. $t\in P$.
The integral 
\[
\int_P |\phi_t(N)\cap U| dt \leq \text{Vol}(\mathcal{N}\cap \pi^{-1}(U)).
\]
Now $\pi: \mathcal{N}\cap \pi^{-1}(U)\to U$ is surjective on tangent spaces, with Jacobian factor bounded below by $C^{-1}$, so 
\[
\text{Vol}(\mathcal{N}\cap \pi^{-1}(U))\leq C\int_U  \text{Vol}( \mathcal{N}\cap \pi^{-1}(y) ) dy.
\]
For each $y\in U$, the contributions to  $\mathcal{N}\cap \pi^{-1}(y) $ come from the deformations of the small local region $N\cap B(y, C\epsilon)$. The assumption that $V$ is surjective to $TM/TN$ at every point of $N$, together with the $C^1$-regularity of $N$, allows us to find a codimension $n$ subspace $V_y\subset V$, such that the projection
\[
\mathcal{N}\cap \pi^{-1}(y) \to P=B(0,\epsilon)\subset V\to V_y
\]
exhibits $\mathcal{N}\cap \pi^{-1}(y)$ as part of a $C^1$-graph over the $\epsilon$-ball inside $V_y$, whose volume is bounded by $C\epsilon^{p-n}$. We deduce 
\[
\text{Vol}(\mathcal{N}\cap \pi^{-1}(y) ) \leq C \epsilon^{p-n}.
\]
Combining the above,
\[
\int_P |\phi_t(N)\cap U| dt \leq C\epsilon^{p-n} \text{Vol}(U),
\]
so we can select some $t\in P$ with
\[
|\phi_t(N)\cap U| \leq C \text{Vol}(U)\epsilon^{p-n} (\text{Vol}(P))^{-1} \leq C\text{Vol}(U)\epsilon^{-n}.
\]
\end{proof}

In our application, the idea is that under sufficiently generic Hamiltonian deformation, subsets with very small measure do not contribute to Lagrangian intersection points. 
This allows us to relax Lemma \ref{smallC0density} to a weak $L^1$-assumption on the Lagrangian angle of $L'$.

\begin{prop}
Assume $\norm{\theta_L}_{C^0}\leq \epsilon$, while $\text{Vol}( \{  |\theta_{L'} > \epsilon      \}   )\ll \epsilon^{-n}$. 
For all $p\in L$, there is some $p'\in L\cap B(C_1\epsilon^{1/2n})$, such that $\text{dist}(p', L')\leq C\epsilon^{\frac{n+1}{2n}  }	 $.
\end{prop}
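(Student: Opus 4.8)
The plan is to repeat the argument of Lemma \ref{smallC0density}, but insert the generic perturbation technique of Proposition \ref{Arnold} so that the bad set $\{\theta_{L'}>\epsilon\}$, which now only has small measure rather than being absent, is guaranteed to miss the relevant part of the perturbed Lagrangian. First I would fix $p\in L$ and, exactly as before, construct a Hamiltonian function $h\colon X\to\R$ with $\mathcal{L}(h)=f$, where $f\geq 4$ outside $B(p,\eta)$, $\int_L e^{-\rho}f\,dvol_L=0$, $|f|+|df|\eta\leq C\eta^{-n}$, and $\eta=C_1\epsilon^{1/2n}$, so that $|dh|+\eta|\nabla dh|\leq C\eta^{1-n}$ and the time-$\epsilon$ flow $\varphi_\epsilon$ has displacement $\leq C\epsilon\eta^{1-n}\leq C\epsilon^{(n+1)/2n}\ll1$. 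As in Lemma \ref{smallC0density}, on $\varphi_\epsilon(L\setminus B(p,\eta))$ we get $\theta_{\varphi_\epsilon(L)}\geq 2\epsilon$, so any intersection $q\in\varphi_\epsilon(L)\cap L'$ lying \emph{outside} the image of $B(p,\eta)$ and \emph{outside} the bad set $\{\theta_{L'}>\epsilon\}$ has Floer degree $\mu(q)\geq\frac{1}{\pi}(\theta_{\varphi_\epsilon(L)}-\theta_{L'})>0$, hence is not of degree zero. Since $HF^0(\varphi_\epsilon(L),L')=HF^0(L,L')\neq0$, there must be a degree-zero intersection point, and it must therefore lie either in $\varphi_\epsilon(L\cap B(p,\eta))$ or in $\varphi_\epsilon(L)\cap\{\theta_{L'}>\epsilon\}$.

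Next I would rule out the second possibility by a counting argument. The point is that the $C^0$-small Hamiltonian perturbation $\varphi_\epsilon$ above is not unique: we may compose it with a further generic perturbation drawn from a sufficiently rich finite-dimensional family $V$ of vector fields on $X$ (supported near $L$ and surjective onto the normal bundle $TX/TL$ along $L$), exponentiating a ball $P=B(0,\delta)\subset V$ with $\delta$ comparable to $\epsilon$ — small enough to preserve all the angle and displacement estimates above. Applying Proposition \ref{Arnold} with $M=X$, $N=\varphi_\epsilon(L)$, $N'=L'$ and $U=\{\theta_{L'}>\epsilon\}$, we can select a perturbation parameter $t\in P$ such that the number of intersection points of the twice-perturbed Lagrangian with $U$ is at most $C\epsilon^{-n}\,\text{Vol}(\{\theta_{L'}>\epsilon\})\ll C\epsilon^{-n}\cdot\epsilon^n=C$, and in fact with room to spare we can force this count to be strictly less than $1$, hence zero. (One should note that the constant $C$ here depends only on $X$, $V$, and the $C^1$-regularity of $L$ — the regularity bounds on $L'$ do not enter, which is exactly the point.) For this twice-perturbed Lagrangian $\tilde L$, every degree-zero intersection with $L'$ must lie in the image of $B(p,\eta)$ under the (small) total perturbation, so there is a point $p'\in L\cap B(p,C_1\epsilon^{1/2n})$ whose distance to $\tilde L\cap L'$, and hence to $L'$, is at most the total displacement $C\epsilon^{(n+1)/2n}$.

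The main obstacle I anticipate is bookkeeping the interaction of the two perturbations: one must check that inserting the generic family $V$ does not spoil the lower bound $\theta\geq 2\epsilon$ on the complement of $B(p,\eta)$, nor the smallness of the displacement, nor the degree computation — this requires choosing the radius $\delta$ of the generic perturbation ball small relative to $\epsilon$ (say $\delta\ll\epsilon\eta^{-n}$ is already too crude; one wants $\delta$ itself of order $\epsilon$ times a small constant), while still keeping $\text{Vol}(P)$ large enough that the averaging in Proposition \ref{Arnold} produces a count below $1$. A secondary subtlety, as flagged in Remark \ref{connecteddomain}, is the immersed case: the Hamiltonian $h$ must take equal values at the self-intersection points, which is why $f$ is required to satisfy $f\geq4$ rather than $f=4$ off $B(p,\eta)$; the same flexibility should be built in here. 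Once these compatibilities are arranged, the conclusion $\text{dist}(p',L')\leq C\epsilon^{(n+1)/2n}$ with $p'\in L\cap B(p,C_1\epsilon^{1/2n})$ follows verbatim as in Lemma \ref{smallC0density}.
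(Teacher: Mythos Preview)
Your proposal is correct and follows essentially the same route as the paper: construct the Lemma~\ref{smallC0density} perturbation $\varphi_\epsilon(L)$, then compose with a generic member of a finite-dimensional family of \emph{Hamiltonian} perturbations of size $\sim\epsilon$ and apply Proposition~\ref{Arnold} to force $\phi_t\varphi_\epsilon(L)\cap\{\theta_{L'}>\epsilon\}=\emptyset$, after which the degree-zero intersection is trapped in the image of $B(p,\eta)$. The only point to tighten is that the generic family $V$ must consist of Hamiltonian vector fields (so that $\phi_t\varphi_\epsilon(L)$ remains Lagrangian and $HF^0$ is preserved), and the paper handles your ``bookkeeping obstacle'' simply by normalizing the Euclidean metric on $V$ so that unit vectors act with $C^0$-norm $\ll 1$ on the Grassmannian bundle---then the $\epsilon$-ball in $V$ perturbs the Lagrangian angle by $\ll\epsilon$, preserving $\theta\geq\tfrac{3}{2}\epsilon$ outside $B(p,\eta)$.
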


\begin{proof}
We can find a large dimensional vector space $V$ of Hamiltonian vector fields, which is surjective to the normal space $TX/TL$ at every point of $L$. We can pick the Euclidean metric on $V$, so that for any $v$ in the unit ball of $V$, the induced vector field acting on the Grassmannian bundle over $X$ has $C^0$-norm $\ll 1$. In particular, the Hamiltonian diffeomorphisms $\phi_t$ obtained via exponentiating vector fields in $ P=B(0,\epsilon)\subset V$, only change the Lagrangian angle function by an amount $\ll \epsilon$.

We now re-examine the argument of Lemma \ref{smallC0density}. For any $p\in L$, we can construct the $C^1$-small Hamiltonian deformation $\varphi_\epsilon(L)$ of $L$, such that on $\varphi_\epsilon(L)(L\setminus B(p,\eta))$, we have $\theta_{\varphi_\epsilon(L)}\geq 2\epsilon$. Thus for any $t\in P$, the Hamiltonian deformation $\phi_t\varphi_\epsilon(L)$ satisfies away from $B(p,\eta)$
\[
\theta_{\phi_t\varphi_\epsilon(L)} \geq \theta_{\varphi_\epsilon(L)}- \frac{\epsilon}{2} \geq \frac{3}{2}\epsilon.
\] 
Let $U=\{  \theta_{L'}> \epsilon   \}\subset L'$, then according to Prop. \ref{Arnold}, there is some $t\in P$, such that $\phi_t\varphi_\epsilon(L)$ is transverse to $L'$, and the number of intersection points
\[
|\phi_t\varphi_\epsilon(L)\cap U| \leq C\epsilon^{-n} \text{Vol}(  U ) \ll 1. 
\]
Thus $\phi_t\varphi_\epsilon(L)\cap U$ is in fact \emph{empty}. This forces $\theta_{L'}\leq \epsilon$ at the Lagrangian intersections $\phi_t\varphi_\epsilon(L)\cap L'$, and we conclude as in Lemma \ref{smallC0density}.
\end{proof}

%The following consequence is worth noting:

%\begin{cor}
%Let $L_1, L_2$ be unobstructed graded Lagrangian branes inside a fixed symplectic Calabi-Yau manifold, defining objects in the Fukaya category. Then the total rank of the Floer cohomology
%\[
%\text{rk} HF^*(L_1,L_2) \leq C \text{Vol}(L_1)\text{Vol}(L_2),
%\]
%where the constant $C$ only depends on $X$ but not on $L_1, L_2$.
%In particular, the rank of the self Floer cohomology $\text{rk} HF^*(L,L) \leq C \text{Vol}(L)^2.$
%\end{cor}

%\begin{proof}
%(courtesy of P. Seidel)

%...........

%\end{proof}

%\begin{rmk}
%The moral is that volume bounds the Floer theoretic complexity of Lagrangian branes, without any assumption on the mean curvature. This may be compared to the recent work of Antoine Song \cite{Song} bounding the total Betti numbers of area minimizing hypersurfaces in terms of the index.
%\end{rmk}

Consequently, we get one half of the Hausdorff distance estimate:

\begin{cor}\label{HalfHausdorff}
Under the same conditions,
\begin{equation}
\sup_{p\in L} \text{dist}(p,L') \leq C\epsilon^{1/2n}.
\end{equation}
\end{cor}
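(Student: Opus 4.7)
The statement is an immediate consequence of the preceding proposition combined with the triangle inequality. My plan is to apply the proposition pointwise on $L$ and then observe that the two distance bounds combine to yield the claimed exponent uniformly.

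Concretely, for any given $p \in L$, the preceding proposition produces a companion point $p' \in L$ with $\text{dist}(p, p') \leq C_1 \epsilon^{1/2n}$ (since $p' \in B(p, C_1 \epsilon^{1/2n})$) and $\text{dist}(p', L') \leq C \epsilon^{(n+1)/2n}$. The triangle inequality then yields
\[
\text{dist}(p, L') \;\leq\; \text{dist}(p, p') + \text{dist}(p', L') \;\leq\; C_1 \epsilon^{1/2n} + C\epsilon^{(n+1)/2n}.
\]
Since $\epsilon \ll 1$ and $\frac{n+1}{2n} > \frac{1}{2n}$, the first term dominates, so $\text{dist}(p, L') \leq C\epsilon^{1/2n}$ with a constant depending only on the ingredients allowed in the proposition (i.e.\ $X$ and the $C^2$-regularity bounds of $L$). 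Taking the supremum over $p \in L$ gives the claim.

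There is no serious obstacle at this step; all the work has been absorbed into the preceding proposition, which combines the Floer-theoretic argument of Lemma \ref{smallC0density} with the generic perturbation technique of Proposition \ref{Arnold} to handle the small-measure region where $\theta_{L'} > \epsilon$. The corollary is essentially a bookkeeping reformulation of the pointwise ``nearness up to a small wiggle inside $B(p, C_1 \epsilon^{1/2n})$'' statement as a uniform one-sided Hausdorff bound. The asymmetry of the conclusion, compared to the two-sided bound in Theorem \ref{main}, reflects that the proposition relies on $L$ having controlled $C^2$-regularity (used to solve $\mathcal{L}(h)=f$ and to extend $h$ to $X$); the reverse bound $\sup_{p \in L'}\text{dist}(p,L)$ is not addressed here and presumably requires the additional arguments sketched in the subsequent sections of the paper.
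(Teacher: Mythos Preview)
Your proposal is correct and matches the paper's approach exactly: the paper treats this corollary as an immediate consequence of the preceding proposition (indeed, the same triangle-inequality conclusion $\text{dist}(p,L')\leq C\epsilon^{1/2n}$ is already spelled out at the end of the statement of Lemma~\ref{smallC0density}), and provides no separate proof.
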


\begin{rmk}\label{C2regularity}
We have so far not used the condition $[L]=[L']\in H_n(X,\Q)$. Without this condition, the constant $C$ in Cor. \ref{HalfHausdorff} really requires some regularity bound on $L$. For instance, we consider a surgery exact triangle $L_1\to L_2\to L_3\to L_1[1]$, where $L_2$ is the Lagrangian connected sum of $L_1, L_3$ with very small neck, and all Lagrangians angles can be made arbitrarily small in $C^0$. The mere assumption that $HF^0(L_1,L_2)\neq 0$ cannot imply that $\sup_{p\in L_2} \text{dist}(p,L_1)$ is small. The proof breaks down because the $C^1$-regularity bound of $L_2$ is highly degenerate in the neck region.
\end{rmk}

%....................

%A variant of the above argument shows that the subset of $L'$ close to $L$ has nontrivial measure. 

\subsection{Monotonicity inequality}

In minimal surface theory, the famous monotonicity formula says that for an $n$-dimensional submanifold $N$ inside a smooth compact ambient manifold, if the mean curvature $\norm{\vec{H}}_{L^\infty}<+\infty$, then inside coordinate balls the volume ratio
\[
e^{Cr} r^{-n}\text{Vol}(B(p,r)) 
\]
increases with the radius $r$. One useful consequence is that the volume of $N\cap B(p,r)$ has a lower bound for $p\in N$. In this paper we \emph{only impose assumptions on the Lagrangian angle $\theta$, but not on $|\vec{H}|$ directly}, but the \textbf{volume lower bound} is still satisfied.  Our style of arguments are inspired by Neves  \cite[section 3.3]{Neves} (\cf also \cite[section 5.2]{Li}).

Recall a special case of the optimal isoperimetric inequality of Almgren. Denote $\omega_n$ as the volume of the $n$-dim unit ball in $\R^n$.

\begin{prop}\cite[Thm 10]{Almgrenisoperimetric}
Let $T$ be an $(n-1)$-dimensional integral current inside $\R^N$ with $\partial T=0$. Then there is an integral $n$-current $Q$ inside $\R^N$ with $\partial Q=T$ and 
\[
Mass(Q) \leq n^{ -\frac{n}{n-1}  } \omega_n^{-\frac{1}{n-1}} Mass(T)^{\frac{n}{n-1}}.
\]
\end{prop}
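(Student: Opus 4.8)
The assertion is Almgren's optimal filling (isoperimetric) inequality for integral cycles in Euclidean space, and the natural route is to reduce it to a sharp isoperimetric inequality for a mass-minimizing filling, prove a non-optimal version by classical means, and then upgrade the constant. If $T=0$ take $Q=0$; otherwise the set of integral $n$-currents with boundary $T$ is nonempty (any cone over $T$ works, using $\partial T=0$), so by the Federer--Fleming compactness theorem it contains an element $Q_0$ of least mass. Since $Q_0$ is itself admissible, it suffices to bound $Mass(Q_0)$, and after rescaling the claim is equivalent to the following: for a mass-minimizing integral $n$-current $Q_0$ in $\R^N$, one has $Mass(\partial Q_0)\geq n\,\omega_n^{1/n}Mass(Q_0)^{(n-1)/n}$, with equality (up to translation and dilation) only when $Q_0$ is a flat round $n$-ball.

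\textbf{A non-optimal bound.} That $Mass(Q_0)\leq C(n)\,Mass(T)^{n/(n-1)}$ with a merely dimensional constant is classical, via the Federer--Fleming deformation theorem: one pushes $T$ onto the $(n-1)$-skeleton of a cubical lattice of side $\delta$, with the masses of the pushforward cycle and of the homotopy controlled by $C(n)\,Mass(T)$ and $C(n)\,\delta\,Mass(T)$ respectively; the pushforward is a polyhedral cycle meeting at most $C(n)\,\delta^{-(n-1)}Mass(T)$ of the $n$-dimensional cubes, each of which can be filled with mass $\leq C(n)\,\delta^n$; choosing $\delta\sim Mass(T)^{1/(n-1)}$ closes the estimate. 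This version already suffices whenever the precise value of the constant is irrelevant.

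\textbf{The optimal constant.} This is the substantive part, and I would follow \cite{Almgrenisoperimetric} rather than reconstruct it. The starting point is a sharp, purely Euclidean version of the monotonicity discussed above: because $\R^N$ is flat and $Q_0$ is mass-minimizing, comparing the part of $Q_0$ inside a ball $B(x,r)$ with the cone from $x$ over its boundary slice shows that $r^{-n}$ times the mass of $Q_0$ in $B(x,r)$ is nondecreasing in $r$, with \emph{no} exponential correction, and tends to $\omega_n$ as $r\to 0$ at interior points. Combined with the regularity theory for area-minimizing currents --- smoothness off a closed singular set of Hausdorff dimension $\leq n-2$, and, in Almgren's original treatment, quantitative approximation at small scales by Dirichlet-energy-minimizing multivalued functions carrying exactly the round-ball normalization --- this exact normalization is propagated from the infinitesimal scale out to the whole current, and a structural/symmetrization argument identifies the equality case as the flat ball.

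\textbf{Main obstacle.} The real difficulty is precisely the passage from a dimensional constant to the optimal one. The obstruction is structural: a mass-minimizing filling of an $(n-1)$-cycle need not be a smooth minimal submanifold --- it can carry a genuine singular set --- so one cannot simply invoke a sharp isoperimetric inequality for \emph{smooth} minimal submanifolds of $\R^N$ (which, for $n\geq 3$, was itself obtained only much later, by quite different ABP-type methods); handling the singularities without losing the exact constant is the technical heart of \cite{Almgrenisoperimetric}. For the purposes of the present paper the non-optimal bound above would also serve, at the cost of worse numerical factors in the subsequent estimates.
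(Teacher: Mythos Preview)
The paper does not supply its own proof of this proposition; it is quoted verbatim from \cite[Thm~10]{Almgrenisoperimetric} and used as a black box. Your proposal therefore goes beyond what the paper offers: you reduce to a mass-minimizing filling via Federer--Fleming compactness, recover a non-sharp bound by the deformation theorem, and then (correctly) identify the passage to the optimal constant as the substantive part, deferring to Almgren's original argument rather than reconstructing it. As an outline of how Almgren's theorem is proved this is accurate in spirit, though it is of course only a sketch.

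One genuine correction to your closing remark: it is \emph{not} true that the non-optimal deformation-theorem bound would suffice for this paper ``at the cost of worse numerical factors.'' The paper says explicitly in the remark following the proposition that the sharpness of the constant will be important, and the reason is structural rather than cosmetic. The sharp isoperimetric constant is what produces the lower bound $\text{Vol}(L'\cap B_{Eucl}(r))\geq \omega_n(r-C\lambda)^n(1-Cr)$ in Proposition~\ref{monotonicity}; this is then played off against the upper bound $\int_{L\cap B_g(q,r)}dvol_L\leq (1+Cr)\omega_n r^n$ coming from the $C^1$-regularity of $L$, and the proof of (\ref{almostfullmeasure}) hinges on the ratio of the two leading constants being $1$ up to $O(r)$. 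With any non-sharp isoperimetric constant $c<\omega_n$ one would only obtain
\[
\int_{L'\cap\{\text{dist}(\cdot,L)\leq r\}} e^{-\rho}\,dvol_{L'}\ \geq\ \tfrac{c}{\omega_n}\bigl(1-Cr-C\tfrac{\lambda}{r}\bigr)\int_L\text{Re}\,\Omega,
\]
and after subtracting from the upper bound of Lemma~\ref{volumeupperbound} a fixed positive fraction of the mass of $L'$ would remain unaccounted for, so the second half of the Hausdorff estimate (and hence the flat-norm and $F$-metric bounds) would not follow.
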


\begin{rmk}
The inequality is saturated by the $n$-dimensional unit ball; this sharpness of constant will be important. Morever, if $T$ is contained in some ball $B(R)$, then $Q$ can also be taken inside $B(R)$, because there is a retraction of $\R^N$ to $B(R)$ with Lipschitz constant one. 	
\end{rmk}

We now work inside the K\"ahler manifold $X$. Around any fixed $p\in X$, we can take local holomorphic coordinates $z_1,\ldots z_n$, such that for small $|z|$,
\[
\omega= \frac{\sqrt{-1}}{2}\sum dz_i\wedge d\bar{z}_i +O(|z|), \quad e^{\rho(p)}\Omega= dz_1\wedge \ldots dz_n +O(|z|), \quad g= \sum |dz_i|^2 +O(|z|).
\]
Beware that the smooth function $\rho$ measures the failure of the metric to be Calabi-Yau.

\begin{prop}\label{monotonicity}
	Let $L'$ be a smooth Lagrangian with $|\theta|\leq \frac{\pi}{2}-\epsilon_0$, and 
	there exists $p'\in L'$ with $|p'|\leq \lambda\ll 1$. We shall use $C(\epsilon_0)$ to denote constants depending only on $\epsilon_0$ and the coordinate chart.
\begin{enumerate}
	\item 
For the Euclidean balls $B_{Eucl}(r)$  with radius $r>\lambda$ contained in the chart, we have the volume lower bound $\text{Vol}(L'\cap B_{Eucl}(r))\geq C(\epsilon_0)(r-\lambda)^n$.
	
	\item Assume furthermore that $\text{Vol}( \{ \theta_{L'}> \epsilon   \}\cap L' ) \leq \lambda^n $ where $\lambda\geq \epsilon^2$. Then for  the Euclidean balls $B_{Eucl}(r)$  with radius $r>C(\epsilon_0)\lambda$ contained in the chart, we have the sharper volume lower bound
\begin{equation}
\text{Vol}(L'\cap B_{Eucl}(r)) \geq \omega_n (r-C(\epsilon_0)\lambda)^n (1-C(\epsilon_0)r).
\end{equation}
\end{enumerate}

\end{prop}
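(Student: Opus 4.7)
The overall plan is a monotonicity argument in the local holomorphic coordinates, combining the almost-calibration of $L'$ by $\text{Re}\,\Omega$ with Almgren's sharp isoperimetric inequality. In these coordinates, $\text{Re}\,\Omega$ is closed with comass $1+O(|z|)$, and $\text{Re}\,\Omega|_{L'} = e^{-\rho}\cos(\theta_{L'})\,dvol_{L'}$, which is at least $\sin(\epsilon_0)\,e^{-\rho_{\min}}\,dvol_{L'}$ under the hypothesis $|\theta_{L'}|\leq \pi/2-\epsilon_0$. Set $V(r) = \text{Vol}(L'\cap B_{Eucl}(r))$ and $T_r = L'\llcorner B_{Eucl}(r)$. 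By slicing, $\text{Mass}(\partial T_r)\leq V'(r)$ for a.e.\ $r$; since $B_{Eucl}(r)$ is contractible, Almgren's inequality produces an integral filling $Q_r\subset B_{Eucl}(r)$ with $\partial Q_r=\partial T_r$ and
\[
\text{Mass}(Q_r)\leq n^{-n/(n-1)}\omega_n^{-1/(n-1)}V'(r)^{n/(n-1)}.
\]
The $n$-cycle $T_r-Q_r$ lies in the contractible ball where $\text{Re}\,\Omega$ is exact, so $\int_{T_r}\text{Re}\,\Omega = \int_{Q_r}\text{Re}\,\Omega \leq (1+O(r))\text{Mass}(Q_r)$.

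For part (1), the crude calibration lower bound $\sin(\epsilon_0)\,e^{-\rho_{\min}}\,V(r)\leq \int_{T_r}\text{Re}\,\Omega$ combined with the above yields $V'(r)\geq c(\epsilon_0)V(r)^{(n-1)/n}$. Integrating the resulting differential inequality $(V^{1/n})'\geq c(\epsilon_0)/n$ from $\lambda$ (where $V(\lambda)\geq 0$) produces $V(r)\geq C(\epsilon_0)(r-\lambda)^n$.

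For part (2), sharpen the calibration comparison by splitting $L'\cap B_{Eucl}(r)$ into a good set $\{\theta_{L'}\leq\epsilon\}$, where $\cos\theta\geq 1-\epsilon^2/2$, and a bad set of total volume $\leq\lambda^n$. On the good part, $dvol_{L'}$ differs from $e^{\rho}\text{Re}\,\Omega|_{L'}$ by a factor $1+O(\epsilon^2)$, and the $O(|z|)$ coordinate distortion of $\text{Re}\,\Omega$ contributes a further $1+O(r)$; the hypotheses $\epsilon^2\leq \lambda$ and $r\geq C(\epsilon_0)\lambda$ let both errors be absorbed into one $1+O(r)$ factor, giving
\[
V(r)-\lambda^n \leq (1+O(r))\,\text{Mass}(Q_r) \leq (1+O(r))\,n^{-n/(n-1)}\omega_n^{-1/(n-1)}V'(r)^{n/(n-1)}.
\]
This rearranges to $\tfrac{d}{dr}(V(r)-\lambda^n)_+^{1/n}\geq (1-O(r))\omega_n^{1/n}$. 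Part (1) ensures $V(C(\epsilon_0)\lambda)\geq \lambda^n$ for a suitable constant, so integrating from $r_0=C(\epsilon_0)\lambda$ up to $r$ and raising to the $n$-th power (discarding the nonnegative $\lambda^n$) yields $V(r)\geq \omega_n(r-C(\epsilon_0)\lambda)^n(1-C(\epsilon_0)r)$.

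The main obstacle is keeping the leading constant in part (2) sharp at $\omega_n$. This forces the use of Almgren's \emph{optimal} isoperimetric inequality, and it requires the careful good/bad angle decomposition so that every deviation from the flat Euclidean calibrated case---the coordinate distortion of $\text{Re}\,\Omega$, the non-constancy of $\rho$, and the pointwise loss $1-\cos\theta$ on the good set---can be absorbed into a single $1-O(r)$ factor. The numerical balance $\lambda\geq \epsilon^2$ in the hypothesis is exactly what makes this absorption possible.
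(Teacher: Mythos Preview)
Your proposal is correct and follows essentially the same approach as the paper: combine Almgren's optimal isoperimetric filling with the calibration inequality for $\text{Re}\,\Omega$ to obtain a differential inequality for $V(r)$, integrate to get part~(1), then use the good/bad angle decomposition together with $\lambda\geq\epsilon^2$ to sharpen the leading constant to $\omega_n$ in part~(2). The paper's argument is identical in structure, including the use of part~(1) to guarantee $V(r)>\lambda^n$ once $r>C(\epsilon_0)\lambda$ and the absorption of the $1/\cos\epsilon$ factor into $1+O(r)$.
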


\begin{proof}
For a.e. $0<r$ smaller than the radius of the coordinate ball (of order $O(1)$), the level set $ L'\cap \partial B_{Eucl}(r)$ is smooth, and the isoperimetric inequality allows us to find $Q\subset B_{Eucl}(r)$ with $\partial Q=L'\cap \partial B_{Eucl}(r)$ and mass bound
\[
Mass(Q) \leq 
n^{ -\frac{n}{n-1}  } \omega_n^{\frac{-1}{n-1}}
 \mathcal{H}^{n-1}(L'\cap B_{Eucl}(r))^{\frac{n}{n-1}}.
\]
Since $\partial (L'\cap B_{Eucl}(r))= \partial Q$, the form $\text{Re}\Omega$ is closed, and $\text{Re}\Omega |_Q \leq (1+O(r))e^{-\rho(p)} dvol_Q$,
\[
\int_{ L'\cap B_{Eucl}(r)} \text{Re} \Omega = \int_{ Q} \text{Re}\Omega \leq (1+O(r))e^{-\rho(p)} Mass(Q).
\]
Under the quantitative almost calibrated condition $|\theta|\leq \frac{\pi}{2}-\epsilon_0$,
\[
\text{Vol}(L'\cap  B_{Eucl}(r)  ) = \int_{ L'\cap B_{Eucl}(r)} dvol \leq \frac{1}{\sin \epsilon_0} \int_{ L'\cap B_{Eucl}(r)}  e^{\rho}\text{Re} \Omega .
\]
Combining the above,
\[
\text{Vol}(L'\cap  B_{Eucl}(r)  )\leq C(\epsilon_0) \mathcal{H}^{n-1}(L'\cap B_{Eucl}(r))^{\frac{n}{n-1}}.
\]
The function $f(r)= \text{Vol}(L'\cap  B_{Eucl}(r)  )$ is increasing in $r$. 
Using the coarea formula, we rewrite the differential inequality as
\[
f(r)\leq C(\epsilon_0) f'(r)^{  \frac{n}{n-1} }.
\]
For $r>\lambda$, we have $f(r)>0$, and the increasing function $f$ satisfies
\[
(f^{1/n})' \geq C(\epsilon_0),
\]
whence $f\geq C(\epsilon_0) (r-\lambda)^n$.

Next we assume $\text{Vol}( \{ \theta_{L'}> \epsilon   \}\cap L' ) \leq \lambda^n $ and deduce the sharper volume lower bound. For $O(1)>r> C\lambda$ with $C$ depending only on the volume lower bound constant, we have  $f(r) >\lambda^n  $.
Notice that on $L'\cap \{ \theta_{L'}\leq  \epsilon   \}\cap B_{Eucl}(r)$,
\[
\int_{ L'\cap B_{Eucl}(r)\cap \{ \theta_{L'}\leq  \epsilon   \}} dvol \leq \frac{1}{\cos \epsilon} \int_{ L'\cap B_{Eucl}(r)\cap \{ \theta_{L'}\leq  \epsilon   \}} e^\rho \text{Re} \Omega \leq \frac{1}{\cos \epsilon} \int_{ L'\cap B_{Eucl}(r)} e^\rho \text{Re} \Omega  .
\]
Whence 
\[
\begin{split}
& f(r)\leq \lambda^n+ \frac{1}{\cos \epsilon} \int_{ L'\cap B_{Eucl}(r)} e^\rho \text{Re} \Omega \\
&
\leq 
\lambda^n+ \frac{1}{\cos \epsilon} (1+O(r)) \text{Mass}(Q)\\
&\leq 
\lambda^n+  \frac{1}{\cos \epsilon} (1+O(r))  n^{ -\frac{n}{n-1}  } \omega_n^{\frac{-1}{n-1}}
f'(r)^{ \frac{n}{n-1} }  .
\end{split}
\]
Now $\frac{1}{\cos \epsilon} = 1+O(\epsilon^2) =1+O(\lambda)=1+O(r)$, so the  $ \frac{1}{\cos \epsilon}$ factor can be absorbed into the $1+O(r)$ factor by changing the constant. Rearranging the terms, 
\[
f'(r) \geq n \omega_n^{1/n} (1- O(r)) (f-\lambda^n)^{\frac{n-1}{n}},
\]
we deduce
\[
\frac{d}{dr}(f-\lambda^n)^{\frac{1}{n}} \geq \omega_n^{1/n} (1- O(r)),
\]
and after integration
\[
f\geq  \omega_n (r-C\lambda)^n (1- O(r) )
\]
as required.
\end{proof}

\begin{rmk}
In the context of special Lagrangians in $\C^n$, a standard way to deduce the monotonicity formula is to compare the volume of $L'\cap B(r)$ with the cone over $L'\cap \partial B(r)$. If we follow this strategy for smooth Lagrangians inside $\C^n$ with $|\theta|\leq \epsilon$, then we would deduce
\[
\text{Vol}(L'\cap B(r)) \leq \frac{1}{\cos \epsilon} \int_{L'\cap B(r)} \text{Re}\Omega \leq   \frac{r}{n\cos \epsilon} \mathcal{H}^{n-1}(L'\cap B(r)),
\]
whence $\frac{d}{dr} \log \text{Vol}(L'\cap B(r)) \geq \frac{n\cos \epsilon}{r}$. Unfortunately, this would only prove the monotonicity of $\text{Vol}(L'\cap B(r)) r^{-n\cos \epsilon}$, which is \emph{not sufficient} to deduce the volume lower bound above.

We think it is an interesting question whether $e^{Cr}\text{Vol}(L'\cap B(r)) r^{-n}$ is monotone under the mere hypothesis that $\norm{\theta}_{C^0}$ is small, with no assumption directly on the mean curvature.
\end{rmk}

\subsection{Hausdorff distance bound}

We still need to show that all the points of $L'$ must stay close to $L$. The strategy is to first show that such points occupy almost the full measure of $L'$, and then use a monotonicity inequality argument to extend this to all points.

We record a weighted volume upper bound:

\begin{lem}\label{volumeupperbound}
If $\text{Vol}(L'\cap \{ \theta_{L'}>\epsilon  \})\leq \epsilon^n$, then $\int_{L'}e^{-\rho}dvol_{L'} \leq \frac{1}{\cos \epsilon} \int_{L'}\text{Re}\Omega+C\epsilon^n$.
\end{lem}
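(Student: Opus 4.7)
The plan is a direct two-part splitting with no substantive obstacle beyond some bookkeeping.

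The first move is to unpack $\text{Re}\Omega|_{L'} = e^{-\rho}\cos\theta_{L'}\,dvol_{L'}$; under the standing quantitative almost calibrated hypothesis $|\theta_{L'}|\leq \frac{\pi}{2}-\epsilon_0$ in force throughout this section, the integrand is pointwise at least $\sin\epsilon_0\cdot e^{-\rho}>0$, so $\text{Re}\Omega|_{L'}$ is a nonnegative top form on $L'$.

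Next I would decompose $L' = A\sqcup B$ with $A=\{|\theta_{L'}|\leq\epsilon\}$ the ``good'' set and $B=L'\setminus A$. On $A$, the pointwise bound $\cos\theta_{L'}\geq \cos\epsilon$ rearranges to $e^{-\rho}\leq \sec\epsilon \cdot e^{-\rho}\cos\theta_{L'}$, so integrating over $A$ and then enlarging the domain back to $L'$ using $\text{Re}\Omega|_{L'}\geq 0$ gives
\[
\int_A e^{-\rho}\,dvol_{L'} \leq \frac{1}{\cos\epsilon}\int_{L'}\text{Re}\Omega.
\]
On $B$, compactness of $X$ yields a uniform bound $e^{-\rho}\leq C$, and combined with the volume hypothesis $\text{Vol}(B)\leq \epsilon^n$ this gives $\int_B e^{-\rho}\,dvol_{L'}\leq C\epsilon^n$. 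Summing the two estimates produces the claim.

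The step I expect to be most delicate is purely bookkeeping: the stated hypothesis is one-sided, controlling $\{\theta_{L'}>\epsilon\}$ but not a priori $\{\theta_{L'}<-\epsilon\}$, which is needed for my $B$. In the context of Theorem \ref{main} the matching Lagrangian $L$ has $|\theta_L|\leq\epsilon$, so the homological identity $\int_{L'}\text{Im}\Omega = \int_L\text{Im}\Omega = O(\epsilon)$ combined with the quantitative almost calibratedness pins the negative-$\theta$ direction and justifies the two-sided splitting above. This is a few lines of elementary estimation rather than a real obstruction.
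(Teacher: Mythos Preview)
Your splitting into $A=\{|\theta_{L'}|\le\epsilon\}$ and its complement is precisely the paper's argument; the paper compresses it to the single line ``On the subset $L'\cap\{\theta_{L'}\le\epsilon\}$, we integrate $e^{-\rho}dvol_{L'}\le\frac{1}{\cos\epsilon}\,\text{Re}\Omega|_{L'}$,'' leaving the bad-set contribution implicit.

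Regarding your final paragraph: the paper simply reads the hypothesis as two-sided without comment (its own pointwise inequality already requires $|\theta_{L'}|\le\epsilon$, not merely $\theta_{L'}\le\epsilon$), so this is a notational looseness in the statement rather than something to be repaired via homology. Note also that your proposed $\text{Im}\Omega$ fix would \emph{not} recover the $C\epsilon^n$ error term from a literally one-sided hypothesis: the homological balance gives only $\int_{L'} e^{-\rho}\sin\theta_{L'}\,dvol_{L'}=O(\epsilon)$, and combining this with $|\sin\theta_{L'}|\ge\sin\epsilon$ on $\{\theta_{L'}<-\epsilon\}$ yields at best $\text{Vol}(\{\theta_{L'}<-\epsilon\})=O(1)$, which is the trivial bound. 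So the correct resolution is the two-sided reading, not the homological patch.
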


\begin{proof}
On the subset $L'\cap \{ \theta_{L'}\leq\epsilon  \}$, we integrate $e^{-\rho}dvol_{L'}\leq \frac{1}{\cos \epsilon} \text{Re}\Omega|_{L'}$.
\end{proof}

\begin{lem}
In the setting of Theorem \ref{main}, 
\begin{equation}\label{almostfullmeasure}
\int_{L'\cap \{   \text{dist}(\cdot, L)\leq \epsilon^{1/4n} \} } e^{-\rho}dvol_{L'} \geq (1- C\epsilon^{1/{4n}}) \int_L \text{Re}\Omega.
\end{equation}

\end{lem}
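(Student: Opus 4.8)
The plan is to first reduce the statement to a lower bound for the amount of mass of $L'$ lying inside a tubular neighbourhood of $L$, and then prove that lower bound by a degree argument on the normal bundle of $L$. Since $\Omega$ is holomorphic, $\text{Re}\,\Omega$ is a closed $n$-form, so the hypothesis $[L]=[L']$ in $H_n(X,\Q)$ gives $\int_{L'}\text{Re}\,\Omega=\int_L\text{Re}\,\Omega=:V$; moreover $\norm{\theta_L}_{C^0}\le\epsilon$ forces $\text{Re}\,\Omega|_L\ge(1-C\epsilon^2)e^{-\rho}dvol_L$, so $V>0$ and $V$ is comparable to $\int_Le^{-\rho}dvol_L$. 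Write $d(\cdot)=\text{dist}(\cdot,L)$, let $N_\delta(L)=\{d<\delta\}$, and set $\delta=\epsilon^{1/4n}$. Using $\text{Re}\,\Omega|_{L'}\le e^{-\rho}dvol_{L'}$ together with Lemma \ref{volumeupperbound} (which gives $\int_{L'}e^{-\rho}dvol_{L'}\le V+C\epsilon^2V+C\epsilon^n$), one checks that inequality (\ref{almostfullmeasure}) follows once one knows
\[
\int_{L'\cap N_\delta(L)}e^{-\rho}dvol_{L'}\ \ge\ (1-C\delta)\,V ,
\]
because then $\int_{L'\cap\{d>\delta\}}\text{Re}\,\Omega\le\int_{L'}e^{-\rho}dvol_{L'}-\int_{L'\cap N_\delta(L)}e^{-\rho}dvol_{L'}\le C\delta V$, and hence $\int_{L'\cap\{d\le\delta\}}e^{-\rho}dvol_{L'}\ge\int_{L'\cap N_\delta(L)}\text{Re}\,\Omega=V-\int_{L'\cap\{d>\delta\}}\text{Re}\,\Omega\ge(1-C\delta)V$.

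To prove the displayed bound I would work in a tubular neighbourhood of $L$; this is the step that uses the $C^2$-regularity of $L$. Fix $\delta_0\ll1$ so that $N_{\delta_0}(L)$ carries a nearest-point projection $\Pi\colon N_{\delta_0}(L)\to L$ with $\mathrm{Lip}(\Pi|_{N_\delta(L)})\le1+C\delta$, and so that the closed form $\text{Re}\,\Omega-\Pi^*(\text{Re}\,\Omega|_L)$ on $N_{\delta_0}(L)$ — which is cohomologous to zero there, since it restricts to $0$ on the zero section $L$ — admits a primitive $\beta$ with $\norm{\beta}_{C^0}\le C\delta$ on $N_\delta(L)$. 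By Corollary \ref{HalfHausdorff} we have $L\subset N_{C\epsilon^{1/2n}}(L')$, and since $\epsilon^{1/2n}\ll\delta$ the piece $\Sigma:=L'\cap N_\delta(L)$ meets a $C\epsilon^{1/2n}$-neighbourhood of every point of $L$. Pushing forward, $\Pi_*\Sigma=\mu\,[L]$ for an integer-valued $BV$ multiplicity $\mu$ on $L$, and $\partial(\Pi_*\Sigma)=\Pi_*\langle L',d,\delta\rangle$, so the jump set of $\mu$ has mass $\le C\,\mathcal H^{n-1}(L'\cap\partial N_\delta(L))$. Because $[L']=[L]$ is a nonzero class in $H_n(X,\Q)$ (nonzero since $V>0$), capping $\Sigma$ off by a small filling of its boundary slice yields a cycle representing $[L]$ in $H_n(N_{\delta_0}(L))\cong\Z$, which pins the generic value of $\mu$ to $1$; the a priori near-minimality $\int_{L'}e^{-\rho}dvol_{L'}\le V+o(1)$ rules out $\mu\ge2$ on a non-negligible set, while comparing $L'$ with the competitor obtained by replacing the degree-zero part of $\Sigma$ over $\{\mu=0\}$ by a cheap filling inside the thin region $\Pi^{-1}(\{\mu=0\})$ — whose cost is controlled using the monotonicity volume lower bound of Proposition \ref{monotonicity} — rules out $\mu=0$ on a non-negligible set. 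Thus $\text{Vol}(\{\mu\neq1\})\le C\delta V$, and then
\[
\int_\Sigma e^{-\rho}dvol_{L'}\ \ge\ \int_\Sigma\text{Re}\,\Omega\ =\ \int_L\mu\,\text{Re}\,\Omega|_L+\int_{\langle L',d,\delta\rangle}\beta\ \ge\ V-C\delta V-C\delta\,\mathcal H^{n-1}(L'\cap\partial N_\delta(L)),
\]
so it remains only to choose the radius $\delta$ within $(\epsilon^{1/4n},2\epsilon^{1/4n})$ so that the slice mass is $O(V)$.

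The hard part is precisely this last control of the boundary slice $L'\cap\partial N_\delta(L)$: both the cost of capping $\Sigma$ off into a cycle and the error term $\int_{\langle L',d,\delta\rangle}\beta$ are proportional to $\mathcal H^{n-1}(L'\cap\partial N_\delta(L))$, whereas the coarea formula only yields $\int_{\epsilon^{1/4n}}^{2\epsilon^{1/4n}}\mathcal H^{n-1}(L'\cap\{d=t\})\,dt\le\text{Vol}(L')\le CV$, which a priori allows individual slices of mass $\sim V/\delta$. Extracting a radius $\delta\sim\epsilon^{1/4n}$ at which the slice has mass $O(V)$ — so that the errors are genuinely $O(\delta V)$ — is the delicate point, and this is where the monotonicity volume lower bound and the near-minimality of $L'$ have to be invoked to forbid $L'$ concentrating in thin shells around $L$. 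A secondary subtlety, needing the same inputs together with the half-Hausdorff bound, is certifying that the generic multiplicity of $\Pi|_\Sigma$ is exactly $1$ and not $0$, i.e. that $\Sigma$ genuinely projects onto almost all of $L$ rather than merely being dense over it.
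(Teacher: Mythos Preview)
Your approach is genuinely different from the paper's, and the two difficulties you flag in the last paragraph are real and remain unresolved in your sketch. The paper sidesteps both entirely with a short double-integral argument. By Cor.~\ref{HalfHausdorff} every $p\in L$ lies within $\lambda=C\epsilon^{1/2n}$ of $L'$, so the \emph{sharp} monotonicity inequality of Prop.~\ref{monotonicity} (with leading constant $\omega_n$, not just some $C(\epsilon_0)$) gives
\[
\text{Vol}(L'\cap B(p,r))\ \ge\ \omega_n(r-C\lambda)^n(1-Cr)\qquad\text{for every }p\in L.
\]
Integrate this over $p\in L$ with weight $e^{-\rho(p)}$ and swap the order of integration by Fubini; up to $O(r)$ relative errors from $\rho$ one obtains
\[
\int_{L'}e^{-\rho(q)}\,\text{Vol}(L\cap B(q,r))\,dvol_{L'}(q)\ \ge\ \omega_n(r-C\lambda)^n(1-Cr)\int_L\text{Re}\,\Omega.
\]
Now the $C^1$-regularity of $L$ gives the matching \emph{sharp} upper bound $\text{Vol}(L\cap B(q,r))\le(1+Cr)\omega_n r^n$, and since only points $q$ with $\text{dist}(q,L)\le r$ contribute to the left side, dividing through by $\omega_n r^n$ yields
\[
\int_{L'\cap\{\text{dist}(\cdot,L)\le r\}}e^{-\rho}\,dvol_{L'}\ \ge\ \bigl(1-Cr-C\lambda/r\bigr)\int_L\text{Re}\,\Omega,
\]
and one sets $r=\epsilon^{1/4n}$.

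The crucial point is that the constant $\omega_n$ appears on both sides and cancels; this matching of sharp constants is exactly what replaces your degree/multiplicity argument. In your outline, ruling out $\mu=0$ on a non-negligible set is essentially as hard as the lemma itself: the half-Hausdorff bound says every point of $L$ is near \emph{some} point of $L'$, but that point need not project back to the same place, so it does not by itself force $\Pi|_{\Sigma}$ to be surjective. And your slice-mass issue is genuine: coarea only gives $\int\mathcal H^{n-1}(L'\cap\{d=t\})\,dt\le C$, which does not single out a good radius. The paper's Fubini trick needs no slicing, no projection, no degree, and no competitor constructions; it converts the problem into a pointwise comparison of two sharp ball-volume estimates, one on $L'$ (monotonicity) and one on $L$ (regularity).
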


\begin{proof}
We can take $\lambda=C\epsilon^{1/2n}$, so that by Cor. \ref{HalfHausdorff}, any $p\in L$ lies within distance $\lambda$ to $L'$. Our setting implies $\lambda\geq \epsilon^2$ and $\text{Vol}(\{\theta_{L'}>\epsilon  \})\leq \lambda^n$. By Prop. \ref{monotonicity}, and the fact that Euclidean balls and Riemannian balls only differ by relative error $O(r)$, we see that for $O(1)\geq r\geq C\lambda$, 
\[
\text{Vol}_g (L'\cap B_g(p,r)) \geq \omega_n (r-C\lambda)^n (1-Cr),\quad \forall p\in L.
\]
The constants are uniform for $p\in L$, so after integration
\[
\begin{split}
& \int_L e^{-\rho(p)}\text{Vol}_g (L'\cap B_g(p,r)) dvol_L(p) 
\\
\geq  & \omega_n (r-C\lambda)^n (1-Cr)\int_L e^{-\rho}dvol_L 
\\
\geq & \omega_n (r-C\lambda)^n (1-Cr)\int_L \text{Re}\Omega.
\end{split}
\]
The LHS can be computed by Fubini's theorem as
\[
\int_{L'} e^{-\rho(q)}dvol_{L'}(q) \int_{L\cap B_g(q,r)} e^{\rho(q)-\rho(p)} dvol_L(p).
\]
The function $\rho$ is smooth, so $|\rho(q)-\rho(p)|\leq Cr$. The Lagrangian $L$ has fixed $C^1$-regularity bound, so for small radius $r\ll 1$,
\[
\int_{L\cap B_g(q,r)} dvol_L(p)\leq (1+Cr) \omega_n r^n.
\]
Combining the above,
\[
\begin{split}
& \omega_n (r-C\lambda)^n (1-Cr)\int_L \text{Re}\Omega\leq \int_L e^{-\rho(p)}\text{Vol}_g (L'\cap B_g(p,r)) dvol_L(p) 
\\
\leq & (1+Cr) \omega_n r^n \int_{L'\cap \{   \text{dist}(\cdot, L)\leq r \} } e^{-\rho(q)}dvol_{L'}(q),
\end{split}
\]
whence for $C\lambda\leq r\leq O(1)$,
\begin{equation}\label{distanceweakL1}
\int_{L'\cap \{   \text{dist}(\cdot, L)\leq r \} } e^{-\rho(q)}dvol_{L'}(q) \geq (1- Cr- C\frac{\lambda}{r}) \int_L \text{Re}\Omega. 
\end{equation}
The choice $r= \epsilon^{1/4n}$ yields the claim.
\end{proof}

\begin{rmk}
In the above, we assumed $L$ is embedded.
If $L$ is merely immersed with transverse self intersections, then the inequality \[
\int_{L\cap B_g(q,r)} dvol_L(p)\leq (1+Cr) \omega_n r^n.
\]
will break down for $q$ in the $r$-neighbourhood of the self intersection points. Essentially the same proof would then give
\[
\int_{L'\cap \{   \text{dist}(\cdot, L)\leq \epsilon^{1/4n} \} } e^{-\rho}dvol_{L'} \geq (1- C\epsilon^{1/{4n}}) \int_L \text{Re}\Omega -Cr^n.
\]
Since $r^n\ll \epsilon^{1/4n}$, the $r^n$ term can be absorbed.

\end{rmk}

We can finally prove the remaining half of the \textbf{Hausdorff distance bound}:

\begin{prop}
In the setting of Theorem \ref{main}, 
\begin{equation}
\sup_{p\in L'} \text{dist}(p,L)\leq C\epsilon^{ \frac{1}{4n^2}  }.
\end{equation}
\end{prop}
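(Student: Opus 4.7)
My plan is to reverse-engineer the bound from the measure-theoretic information already collected. Equation (\ref{almostfullmeasure}) tells me that the \textbf{good set} $G_r := \{q \in L' : \text{dist}(q,L) \leq r\}$ with $r = \epsilon^{1/4n}$ carries essentially the full weighted mass of $L'$. Using the homology hypothesis $[L] = [L'] \in H_n(X,\mathbb{Q})$, together with the fact that $\text{Re}\,\Omega$ is closed, one has $\int_{L} \text{Re}\,\Omega = \int_{L'} \text{Re}\,\Omega$. Combining this with the weighted volume upper bound of Lemma \ref{volumeupperbound}, and subtracting (\ref{almostfullmeasure}), the weighted volume of the complementary \textbf{bad set} $B_r := L' \setminus G_r$ is controlled:
\begin{equation}
\int_{B_r} e^{-\rho}\,dvol_{L'} \leq \Bigl(\tfrac{1}{\cos\epsilon}-1+C\epsilon^{1/4n}\Bigr)\int_L \text{Re}\,\Omega + C\epsilon^n \leq C\epsilon^{1/4n}.
\end{equation}

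Next I would apply the monotonicity volume lower bound, Prop. \ref{monotonicity}(1), not at points of $L$ but at points of $L'$ itself (taking $\lambda = 0$ in that proposition since the reference point lies on $L'$). This yields $\text{Vol}_g(L'\cap B_g(p,s)) \geq C(\epsilon_0) s^n$ for any $p \in L'$ and any sufficiently small radius $s$. Now suppose for contradiction that some $p \in L'$ satisfies $\text{dist}(p,L) > r + s$ where $r = \epsilon^{1/4n}$. Then the entire ball $L'\cap B_g(p,s)$ sits inside $B_r$, so
\begin{equation}
C(\epsilon_0) s^n \leq \text{Vol}_g(L' \cap B_g(p,s)) \leq C\,\text{Vol}(B_r) \leq C\epsilon^{1/4n}.
\end{equation}
This forces $s \leq C\epsilon^{1/4n^2}$, and since $\epsilon^{1/4n^2} \gg \epsilon^{1/4n}$ for small $\epsilon$, the combined distance bound $r + s$ is dominated by $C\epsilon^{1/4n^2}$. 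Contradicting this gives $\text{dist}(p,L) \leq C\epsilon^{1/4n^2}$ for every $p \in L'$.

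The step I expect to require the most care is verifying that the volume lower bound from Prop. \ref{monotonicity} can be freely applied at \emph{arbitrary} $p \in L'$ with the radius $s \sim \epsilon^{1/4n^2}$ still satisfying the hypotheses of that proposition (essentially $s$ small compared to the injectivity radius of the coordinate chart, and the quantitative almost calibrated condition being preserved). Everything else is a clean bookkeeping computation: the homological equality turns the good-set bound into a bad-set mass bound, and the monotonicity converts a small mass of a bad set into a small diameter for any connected lump of $L'$ sitting inside it. The worse exponent $1/4n^2$ (versus $1/2n$ in Cor. \ref{HalfHausdorff}) is precisely the loss incurred by having to trade bulk mass information on $L'$ for pointwise distance information, through the $n$-th root in the volume lower bound.
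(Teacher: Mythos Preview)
Your argument is correct and is essentially identical to the paper's own proof: both combine Lemma~\ref{volumeupperbound} with $[L]=[L']$ and (\ref{almostfullmeasure}) to bound the weighted volume of the bad set $\{\text{dist}(\cdot,L)\geq \epsilon^{1/4n}\}\subset L'$ by $C\epsilon^{1/4n}$, and then feed a ball around a far point $p\in L'$ into Prop.~\ref{monotonicity}(1) (with $\lambda=0$) to force $\text{dist}(p,L)\leq C\epsilon^{1/4n^2}$. The only cosmetic difference is that the paper writes the ball radius as $r_p-\epsilon^{1/4n}$ rather than introducing an auxiliary $s$; your ``suppose for contradiction'' phrasing is slightly awkward (you don't actually derive a contradiction, you derive a bound on $s$), but the substance is the same.
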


\begin{proof}
The assumption that $[L]=[L']\in H_n(X,\Q)$, together with Lemma \ref{volumeupperbound} imply
\[
\int_{L'}e^{-\rho}dvol_{L'} \leq \frac{1}{\cos \epsilon} \int_{L}\text{Re}\Omega+C\epsilon^n\leq (1+C\epsilon^2) \int_{L}\text{Re}\Omega.
\]
Combined with Lemma \ref{almostfullmeasure},
\[
\int_{L'\cap \{   \text{dist}(\cdot, L)\geq \epsilon^{1/4n} \} } e^{-\rho}dvol_{L'} \leq C\epsilon^{1/{4n}} \int_L \text{Re}\Omega.
\]
Assume $p\in L'$ with $\text{dist}(p,L)=r_p>\epsilon^{1/4n}$, so that $L'\cap B(p,r_p- \epsilon^{1/4n} )$ is contained inside  $L'\cap \{   \text{dist}(\cdot, L)\geq \epsilon^{1/4n} \}$. From Prop. \ref{monotonicity},
\[
C(r_p-\epsilon^{1/4})^n \leq \text{Vol}( L'\cap  B(p,r_p- \epsilon^{1/4n} ) ) \leq C\epsilon^{1/{4n}} \int_L \text{Re}\Omega.
\]
Consequently $r_p\leq C\epsilon^{1/4n^2}$.
\end{proof}

\subsection{Flat norm distance bound}

Recall the flat norm distance between $L$ and $L'$ is defined as
\[
\inf \{  \text{Mass}(T)+ \text{Mass}(R)  : L-L'= \partial T+ R             \},
\]
where $T, R$ are integral currents of dim $n+1$ and $n$ respectively. Intuitively, the Hausdorff distance bound is an $L^\infty$ type bound on the distance function, while the flat norm behaves like an $L^1$ type bound. In the Corollary below we leverage the previous information to get a slightly better exponent than the obvious bound from $L^\infty\to L^1$, but we expect this is still not sharp.

\begin{cor}
In the setting of Theorem \ref{main}, there is an integral current $T$ with $\partial T=L'-L$, such that $\text{Mass}(T)\leq C\epsilon^{\frac{1}{4n^2}+ \frac{1}{4n} }$. 
\end{cor}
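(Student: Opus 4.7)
The plan is to construct $T$ as the $(n+1)$-current swept out by retracting $L'$ onto $L$ inside the Weinstein tubular neighborhood of $L$. Its mass will be bounded by the integrated distance $\int_{L'}\text{dist}(p,L)\,dvol_{L'}$, which is then estimated via a layer-cake split into a near and a far part, using the weak $L^1$-bound (\ref{distanceweakL1}) and the Hausdorff bound respectively. This is the natural way to beat the $L^\infty\to L^1$ trivial bound by the factor $\epsilon^{1/4n}$.

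\emph{Construction.} Since $\sup_{p\in L'}\text{dist}(p,L)\leq C\epsilon^{1/4n^2}\ll 1$ is much smaller than the injectivity radius and curvature scale of $L$, the Lagrangian $L'$ lies inside a Weinstein tubular neighborhood $U\simeq T^*L$, and the bundle projection $\pi:U\to L$ is smooth. Let $F:U\times[0,1]\to X$ linearly scale the normal coordinate in $U\simeq T^*L$ from the identity at $t=0$ to $\pi$ at $t=1$, and set $T:=-F_*(L'\times[0,1])$; then $\partial T=L'-\pi_* L'$. Using the connectedness of $L$ and $[L]=[L']\neq 0$ in $H_n(X,\Q)$, a standard degree argument forces $\deg(\pi|_{L'}:L'\to L)=1$, hence $\pi_* L'=L$ as integral currents and $\partial T=L'-L$.

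\emph{Mass bound and key estimates.} The Jacobian of $F$ on $L'\times[0,1]$ is pointwise dominated by $\text{dist}(p,L)(1+C\,\text{dist}(p,L))$, giving
\[
\text{Mass}(T)\leq (1+o(1))\int_{L'}\text{dist}(p,L)\,dvol_{L'}(p)=(1+o(1))\int_0^{R_{\max}}V(t)\,dt,
\]
where $V(t):=\text{Vol}(L'\cap\{\text{dist}(\cdot,L)>t\})$ and $R_{\max}=C\epsilon^{1/4n^2}$. Inequality (\ref{distanceweakL1}) combined with Lemma \ref{volumeupperbound} and $[L]=[L']$ yields $V(t)\leq C(t+\lambda/t)$ for $t\geq C\lambda$, where $\lambda:=C\epsilon^{1/2n}$; the a priori bound $V(t)\leq\text{Vol}(L')\leq C$ handles small $t$; and at $t=r_0:=\epsilon^{1/4n}$ the already-established (\ref{almostfullmeasure}) gives directly $V(r_0)\leq C\epsilon^{1/4n}$.

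\emph{Near/far split at $r_0$.} For the far part, using monotonicity of $V$:
\[
\int_{r_0}^{R_{\max}}V(t)\,dt\leq R_{\max}\cdot V(r_0)\leq C\epsilon^{1/4n^2+1/4n}.
\]
For the near part, splitting further at $C\lambda$ and applying the two bounds on $V$:
\[
\int_0^{r_0}V(t)\,dt\leq C\lambda+C\int_{C\lambda}^{r_0}(t+\lambda/t)\,dt\leq C\epsilon^{1/2n}\log(1/\epsilon).
\]
Since $\frac{1}{2n}-\frac{1}{4n}-\frac{1}{4n^2}=\frac{n-1}{4n^2}>0$ for $n>1$, the near part is $\ll \epsilon^{1/4n+1/4n^2}$ for $\epsilon$ small, and the far part dominates. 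Adding, $\text{Mass}(T)\leq C\epsilon^{1/4n+1/4n^2}$ as required.

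\emph{Main obstacle.} The step most requiring care is the identification $\pi_*L'=L$ as integral currents. The degree-$1$ argument is clean for smooth graphs over $L$, but here $L'$ may have a complicated fold structure inside $U\simeq T^*L$, and one must check the degree reasoning is robust in the integral-current sense; the connectedness of $L$ and $[L]\neq 0$ rationally are both essential. The remaining ingredients (the Jacobian estimate for $F$, uniform in $\epsilon$, from the $C^2$-regularity of $L$; and the near/far split arithmetic) are routine.
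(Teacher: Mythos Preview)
Your proof is correct and essentially identical to the paper's: both construct $T$ by sweeping $L'$ onto $L$ along normal segments in a tubular neighbourhood, invoke the degree-one property of the projection to get $\partial T=L'-L$, and bound $\text{Mass}(T)$ by the layer-cake integral $\int V(t)\,dt$ split into the same three regimes via (\ref{distanceweakL1}), with the far regime $t\geq\epsilon^{1/4n}$ contributing the dominant term $C\epsilon^{1/4n+1/4n^2}$. Your explicit remark that $[L]\neq 0$ in $H_n(X,\Q)$ is needed for the degree argument is a point the paper leaves implicit.
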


\begin{proof}
The Hausdorff bound shows that $L'$ lies within a small $C^0$-neighbourhood of $L$. Using the controlled $C^1$-regularity of $L$, we can view the $C^0$-neighbourhood as its normal bundle, and obtain a projection map $\pi: L'\to L$.  Since $L$ is homologous to $L'$ by assumption, $\pi$ has degree one. We form an $(n+1)$-dimensional integral current $T$ from the union of the line segments joining the points $q\in L'$ to $\pi(q)\in L$. Up to the choice of orientation sign, $\partial T= L'-\pi_*(L')= L'-L$. The lengths of the segments are comparable to $\text{dist}(q,L)$. Thus
\[
\text{Mass}(T)\leq C \int_{L'} \text{dist}(\cdot, L) dvol_{L'} = C \int_0^{\sup \text{dist}} dr \int_{ L'\cap  \{\text{dist}(\cdot, L)\geq r \} }  dvol_{L'}.
\]
Recall the weak $L^1$-type estimate (\ref{distanceweakL1}), which implies
\[
\int_{ L'\cap  \{\text{dist}(\cdot, L)\geq r \} }  dvol_{L'} \leq C\begin{cases}
1,\quad r=O(\epsilon^{1/2n} ),
\\
\frac{ \epsilon^{1/2n}} {r}, \quad \epsilon^{1/2n}\leq r\leq \epsilon^{1/4n},
\\
\epsilon^{1/4n}, \quad  r>\epsilon^{1/4n}. 
\end{cases}
\]
By the Hausdorff distance estimate, $\sup \text{dist}(\cdot, L)\leq C\epsilon^{1/4n^2}$. 
 Combining the above gives the mass bound.
\end{proof}

\begin{rmk}
If $L$ is immersed rather than embedded, then $\pi$ would be only Lipschitz near the self intersection points, but the above argument is unaffected.
\end{rmk}

\subsection{F-metric bound}

We view the $C^0$ neighbourhood of $L$ as its normal bundle, which has a projection $\pi$ to its zero section $L$. Normal geodesic flow allows us to identify the tangent spaces $T_qX$ with $T_{\pi(q)}X$. We now show that $L'$ is graphical over $L$ with small norm, away from a set with small measure. Denote $\vec{T}(q)$ as the unit $n$-vector $e_1\wedge \ldots e_n$ on $L'$ where $e_i$ is an oriented orthonormal frame of $T_qL'$, and $\vec{T}(\pi(q))$ as the analogous unit $n$-vector from the oriented orthonormal basis of $T_{\pi(q)}L$.

\begin{prop}\label{almostgraphical}
In the setting of Theorem \ref{main},  away from a subset $E\subset L$ with $\text{Vol}(E)+ \text{Vol}(\pi^{-1}(E))\leq C\epsilon^{1/4n}$, the projection $\pi:L'\setminus \pi^{-1}(E)\to L\setminus E$ is bijective, and 
\[
\int_{ L'\setminus \pi^{-1}(E) } |\vec{T}(q)- \vec{T}(\pi(q))|^2 dvol(q) \leq C\epsilon^{1/4n}.
\]
\end{prop}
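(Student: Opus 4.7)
The plan is to bound the ``graphicality defect'' $1 - J(q)$, where $J(q) := \langle \vec{T}(q), \vec{T}(\pi(q))\rangle$, by exploiting the homological identity $\pi_{*}[L'] = [L]$ rather than a purely volumetric comparison. The Hausdorff estimate from Theorem \ref{main} places $L'$ inside a tubular neighborhood of $L$ of radius $C\epsilon^{1/4n^2}$, so the normal projection $\pi\colon L'\to L$ is smooth. Since $[L]=[L']$ in $H_n(X,\Q)$ and $\pi$ retracts this neighborhood to $L$, one has $\pi_{*}[L']=[L]$ as integral currents on $L$, yielding the key identity
\[
\int_{L'}\pi^{*}(e^{-\rho}dvol_L)=\int_L e^{-\rho}dvol_L.
\]

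The first main step is to convert this identity into an $L^2$ bound on $L'$ minus a small set. Introduce the ``far'' set $E_0:=L'\cap\{\text{dist}(\cdot,L)>\epsilon^{1/4n}\}$; by (\ref{almostfullmeasure}) combined with Lemma \ref{volumeupperbound} and the two-sided boundedness of $e^{-\rho}$, $\text{Vol}(E_0)\leq C\epsilon^{1/4n}$. On $L'\setminus E_0$, a normal-coordinate expansion using the $C^2$-regularity of $L$ gives $\pi^{*}(e^{-\rho}dvol_L)=e^{-\rho(q)}(J(q)+O(\epsilon^{1/4n}))\,dvol_{L'}$. Substituting into the key identity, bounding $\int_{E_0}\pi^{*}(e^{-\rho}dvol_L)=O(\epsilon^{1/4n})$ using that the pulled-back top form is uniformly bounded, and combining with Lemma \ref{volumeupperbound} to get $\int_{L'}e^{-\rho}dvol_{L'}-\int_L e^{-\rho}dvol_L\leq C\epsilon^2$, I would deduce
\[
\int_{L'\setminus E_0}e^{-\rho}(1-J(q))\,dvol_{L'}\leq C\epsilon^{1/4n}.
\]
Since $|\vec{T}(q)-\vec{T}(\pi(q))|^2=2(1-J(q))$ and $e^{-\rho}$ is bounded below, this is the required $L^2$ estimate on $L'\setminus E_0$ (with constants absorbing the weight).

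For the bijection, introduce the ``tilted'' set $E_2:=\{q\in L'\setminus E_0:J(q)<1/2\}$; by Chebyshev's inequality, $\text{Vol}(E_2)\leq C\epsilon^{1/4n}$. On $L'\setminus(E_0\cup E_2)$ the oriented Jacobian of $\pi$ is bounded below by $\tfrac14$ (after correcting by the $O(\epsilon^{1/4n})$ term), so $\pi$ is orientation-preserving there. Let $E^{(2)}:=\{p\in L:\#\pi^{-1}(p)\geq 2\}$; the area formula together with the tight volume comparison gives $\text{Vol}(E^{(2)})\leq C\epsilon^{1/4n}$. Set $E:=\pi(E_0\cup E_2)\cup E^{(2)}\subset L$; since $\pi$ has pointwise Jacobian bounded by $1$, one has $\text{Vol}(E)\leq \text{Vol}(E_0\cup E_2)+\text{Vol}(E^{(2)})\leq C\epsilon^{1/4n}$. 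For $p\in L\setminus E$, every preimage of $\pi$ lies in $L'\setminus(E_0\cup E_2)$ with positive Jacobian, so by $\pi_{*}[L']=[L]$ the signed and unsigned preimage counts both equal $1$; this yields the bijection, while a parallel area-formula bookkeeping bounds $\text{Vol}(\pi^{-1}(E))\leq C\epsilon^{1/4n}$.

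The main obstacle is orientation control. A purely measure-theoretic $L^1$ bound on $|1-J|$ would leave open the possibility that $J$ oscillates between values near $\pm1$, which would wreck the bijection argument. What saves the argument is that $\pi_{*}[L']=[L]$ is a signed (homological) identity rather than a mass comparison: it forces $J$ close to $+1$, not merely $|J|$ close to $1$, on the complement of the small exceptional set, and this is precisely what enables the degree count to conclude bijectivity. The $C^2$-regularity hypothesis on $L$ enters through the tubular-neighborhood geometry and the error term in the normal-coordinate expansion of $\pi^{*}dvol_L$.
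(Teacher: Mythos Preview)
Your argument is correct and reaches the same conclusion, but it is organised differently from the paper's proof, and the difference is worth recording. The paper works throughout with the calibration form $\text{Re}\,\Omega$: it defines $E$ directly as the multi-preimage locus, uses the almost-metric-decreasing property of $\pi$ to bound $\int_{\pi^{-1}(E)}e^{-\rho}dvol-\int_E\text{Re}\,\Omega$ from below by a constant times $\text{Vol}(E)+\text{Vol}(\pi^{-1}(E))$, and bounds the tangent-plane deviation via the unsigned Jacobian $|d\pi(\vec T)|$ together with the pointwise inequality $|d\pi(\vec T)|e^{-\rho}\geq \text{Re}\,\Omega(\pi_*\vec T)(1-O(\epsilon^{1/4n}))$; these two pieces are then added and the total compared to $\int_{L'}e^{-\rho}dvol-\int_L\text{Re}\,\Omega$ via Lemma~\ref{volumeupperbound}. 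Orientation alignment is implicit, carried by the positivity of $\text{Re}\,\Omega$ on both $L$ and $L'$ under the almost-calibrated hypotheses.

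Your route replaces $\text{Re}\,\Omega$ by the signed pullback identity $\int_{L'}\pi^*(e^{-\rho}dvol_L)=\int_L e^{-\rho}dvol_L$, which makes the role of the degree explicit; the exact relation $|\vec T(q)-\vec T(\pi(q))|^2=2(1-J(q))$ then gives the $L^2$ bound in one stroke, and the auxiliary ``tilted'' set $E_2$ (extracted by Chebyshev) lets you argue bijectivity by a straight degree count. Two remarks: first, once you know every preimage over $L\setminus\pi(E_0\cup E_2)$ has positive Jacobian, the degree-one condition already forces a unique preimage, so including $E^{(2)}$ in $E$ is redundant. Second, your ``parallel area-formula bookkeeping'' for $\text{Vol}(\pi^{-1}(E))$ is not entirely routine: one must split $\pi^{-1}(E)$ into $E_0\cup E_2$ and its complement, use $J_\pi\geq 1/4$ on the latter to pass to $\int_E N'(p)\,dvol_L$ with $N'(p)$ the preimage count in $L'\setminus(E_0\cup E_2)$, and then bound $\int_E N'=\int_L N'-\int_{L\setminus E}N'\leq (\text{Vol}(L)+C\epsilon^{1/4n})-\text{Vol}(L\setminus E)\leq C\epsilon^{1/4n}$. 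With that filled in, your approach and the paper's are equivalent; yours isolates the orientation issue more transparently, while the paper's single combined estimate is slightly more economical.
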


\begin{proof}
The projection $\pi: L'\to L$ is almost metric decreasing: 
\[
|d\pi|_{T_q L'} |\leq 1+ O(\text{dist}(q, \pi(q))).
\]
As a general rule, the error coming from comparing ambient K\"ahler structures at $q$ and $\pi(q)$ is bounded by $O(\text{dist}(q,\pi(q)))$. Let $E_1'= \{ \text{dist}(\cdot, L)\geq \epsilon^{1/4n}   \}\subset L'$, then $E_1'$ has measure bounded by $C\epsilon^{1/4n}$ by Lemma \ref{volumeupperbound} and (\ref{almostfullmeasure}).

Since $\pi:L'\to L$ has degree one, it is surjective.
Let $E$ be the subset of $L $ with at least two preimages. The volume almost decreasing property of $d\pi$, together with the assumption $|\theta|\leq \epsilon$ on $L$, imply
\[
2\int_{E}\text{Re}\Omega=
2\int_{E}  e^{-\rho}dvol (1+ O(\epsilon)) \leq  (1+ O(\epsilon^{1/4n^2})) \int_{ \pi^{-1} (E)} e^{-\rho}dvol.
\]
In particular
\begin{equation}\label{doublecovervol}
\text{Vol}(E)+ \text{Vol}(\pi^{-1}(E))\leq 
C\int_{ \pi^{-1} (E)} e^{-\rho}dvol\leq C' ( \int_{ \pi^{-1} (E)} e^{-\rho}dvol- \int_E  \text{Re}\Omega).
\end{equation}

On the other hand, at the points  $q\in L'\setminus \pi^{-1}(E)\cup E_1'$, the difference between the tangent planes is bounded by
\[
|\vec{T}(q)- \vec{T}(\pi(q))|^2 \leq C (1- |d\pi(\vec{T})| + O(\epsilon^{1/4n} )).
\]
Thus
\[
\begin{split}
& \int_{ L'\setminus \pi^{-1}(E)} |\vec{T}(q)- \vec{T}(\pi(q))|^2 dvol(q)  
\\
 \leq & C\epsilon^{1/4n} + C\int_{L'\setminus \pi^{-1}(E)}  (1- |d\pi(\vec{T})|) dvol
\\
\leq & C\epsilon^{1/4n} + C'\int_{L'\setminus \pi^{-1}(E)}  (1- |d\pi(\vec{T})|) e^{-\rho}dvol.
\end{split}
\]
Here the contribution from $E_1'$ is absorbed into $C\epsilon^{1/4n}$. 
From the pointwise inequality on $L'\setminus \pi^{-1}(E)\cup E_1'$,
\[
|d\pi(\vec{T})|e^{-\rho}=(1- O(\epsilon^{1/4n})) |d\pi(\vec{T})| |\text{Re}\Omega|_L|\geq  \text{Re}\Omega(\pi_*\vec{T})  (1- O(\epsilon^{1/4n})) ,
\]
%and since $\text{Re}\Omega|_{L'}$ is $O(\epsilon)$ close to $e^{-\rho}dvol$ away from negligible measure, 
the above is bounded by
\[
C\epsilon^{1/4n} + C'(\int_{L'\setminus \pi^{-1}(E)} e^{-\rho}dvol  - \int_{L\setminus E} \text{Re} \Omega).
% \leq C\epsilon^{1/4n^2} + C(\int_{L'\setminus \pi^{-1}(E)} \text{Re}\Omega  - \int_{L\setminus E} \text{Re} \Omega).
\]
The coefficient $C'$ can be arranged as the same (large) constant appearing in (\ref{doublecovervol}). Adding the two contributions, and envoking the weighted volume upper bound in Lemma \ref{volumeupperbound},
\[
\begin{split}
& \text{Vol}(E)+ \text{Vol}(\pi^{-1}(E)) +\int_{ L'\setminus \pi^{-1}(E)} |\vec{T}(q)- \vec{T}(\pi(q))|^2 dvol(q) 
\\
\leq & C\epsilon^{1/4n} + C'(\int_{L'} e^{-\rho}dvol  - \int_{L} \text{Re} \Omega)
\\
\leq & C\epsilon^{1/4n} + C'(\int_{L'} \text{Re}\Omega  - \int_{L} \text{Re} \Omega)= C\epsilon^{1/4n},
\end{split}
\]
as required.
\end{proof}

\begin{rmk}
	If $L$ is immersed instead of embedded, then we include the $O(\epsilon^{1/4n^2})$ neighbourhood of the self intersections into $E$, and the rest of the arguments run almost verbatim.
\end{rmk}

Recall the distance between two varifolds $L,L'$ is measured by \textbf{F-metric}
\[
\sup \{  |\int_L f(p, T_p L) dvol_L  -\int_{L'} f(q, T_q L' ) dvol_{L'}  |   \}.
\]
where $f$ ranges over all functions on the Grassmannian bundle of $n$-dimensional tangent subspaces over $X$, with $|f|\leq 1$ and Lipschitz constant at most one. Given a uniform upper bound on the mass, the $F$-metric induces the same topology as the weak topology on varifolds (because continuous functions can be approximated by Lipschitz functions).

\begin{cor}
In the setting above, the $F$-metric between $L, L'$ is bounded by $C\epsilon^{1/8n}$. 
\end{cor}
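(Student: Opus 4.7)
The plan is to test the $F$-metric against an arbitrary $f$ on the Grassmannian bundle with $|f|\le 1$ and Lipschitz constant at most one, and to split the difference $\left|\int_L f\,dvol_L - \int_{L'} f\,dvol_{L'}\right|$ into three pieces: a ``bad-set'' contribution, a change-of-variables error, and a Lipschitz-oscillation term on the graphical part. First I would enlarge the exceptional set $E$ from Proposition \ref{almostgraphical} to absorb $E_1'=\{\text{dist}(\cdot,L)\ge \epsilon^{1/4n}\}\subset L'$ together with its image under $\pi$, which preserves both $\text{Vol}(E)+\text{Vol}(\pi^{-1}(E))\le C\epsilon^{1/4n}$ and the $L^2$ tangent-plane bound; in particular $\text{dist}(q,\pi(q))\le \epsilon^{1/4n}$ on the graphical part.

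The bad-set contribution is immediate from $|f|\le 1$:
\[
\Big|\int_E f\,dvol_L\Big|+\Big|\int_{\pi^{-1}(E)} f\,dvol_{L'}\Big|\le C\epsilon^{1/4n}.
\]
On the good part, since $\pi:L'\setminus \pi^{-1}(E)\to L\setminus E$ is bijective with Jacobian $|J_\pi|=1+O(\epsilon^{1/4n})$ (coming from the Hausdorff bound and the controlled $C^1$-regularity of $L$), a change of variables rewrites $\int_{L\setminus E} f(p,T_pL)\,dvol_L$ as an integral over $L'\setminus \pi^{-1}(E)$, with a Jacobian defect bounded by $C\epsilon^{1/4n}$. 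The Lipschitz hypothesis on $f$ then gives the pointwise estimate
\[
|f(q,T_qL')-f(\pi(q),T_{\pi(q)}L)|\le C\bigl(\text{dist}(q,\pi(q))+|\vec T(q)-\vec T(\pi(q))|\bigr).
\]
The position term integrates to at most $\int_{L'} \text{dist}(\cdot,L)\,dvol_{L'}$, which is dominated by the flat-norm mass $C\epsilon^{1/4n^2+1/4n}$ from the previous corollary and hence is negligible compared to $\epsilon^{1/4n}$.

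The main step, and the one that fixes the final exponent, is the tangent-plane term, which I would bound by Cauchy--Schwarz against the $L^2$ estimate of Proposition \ref{almostgraphical}:
\[
\int_{L'\setminus \pi^{-1}(E)} |\vec T(q)-\vec T(\pi(q))|\,dvol_{L'}\le \text{Vol}(L')^{1/2}\left(\int |\vec T(q)-\vec T(\pi(q))|^2 dvol_{L'}\right)^{1/2}\le C\epsilon^{1/8n}.
\]
Summing all contributions yields the claimed $F$-metric bound $C\epsilon^{1/8n}$, which dominates every $\epsilon^{1/4n}$ error term. The only delicate point is this square-root loss from $L^2$ to $L^1$; it is essentially forced by testing against merely Lipschitz functions, and it is precisely what produces the exponent $1/8n$ in the final bound.
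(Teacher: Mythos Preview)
Your approach is essentially the paper's: discard the bad sets using $|f|\le 1$, compare $f(q,T_qL')$ with $f(\pi(q),T_{\pi(q)}L)$ via the Lipschitz bound on the graphical part, and then apply Cauchy--Schwarz to convert the $L^2$ tangent-plane estimate of Proposition~\ref{almostgraphical} into the $L^1$ bound that forces the exponent $1/8n$. The paper bounds the position term directly by $|q-\pi(q)|\le C\epsilon^{1/4n}$ on $L'\setminus E_1'$ rather than going through the flat-norm corollary, but your detour is harmless.

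One justification needs correcting. The Jacobian $|J_\pi|$ of $\pi|_{L'}$ is \emph{not} $1+O(\epsilon^{1/4n})$ pointwise merely from the Hausdorff bound and the $C^1$-regularity of $L$: those control the ambient differential $d\pi:T_qX\to T_{\pi(q)}L$, but $J_\pi$ also sees the tangent plane $T_qL'$, for which you have no pointwise control. The correct pointwise statement, which is what the paper uses, is
\[
\bigl|\pi^*dvol_L - dvol_{L'}\bigr|\le C\,|\vec T(q)-\vec T(\pi(q))|^2\,dvol_{L'}+O(\epsilon^{1/4n})\,dvol_{L'},
\]
since orthogonal projection of a unit $n$-vector onto a nearby $n$-plane loses length only to second order in the plane discrepancy. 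Integrating this against $|f|\le 1$ and invoking Proposition~\ref{almostgraphical} gives exactly the $C\epsilon^{1/4n}$ volume-form error you claimed, so the final bound is unaffected.
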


\begin{proof}
Since $|f|\leq 1$, the integrals on $E, E_1'$ and $\pi^{-1}(E)$ are bounded by $C\epsilon^{1/4n}$. At $q\in L'\setminus \pi^{-1}(E)\cup E_1'$, by the Lipschitz bound on $f$,
\[
|f(q, T_q L')- f(\pi(q), T_{\pi(q)}L)| \leq |q-\pi(q)|+ |\vec{T}(q)- \vec{T}(\pi(q))| \leq C\epsilon^{1/4n}+ |\vec{T}(q)- \vec{T}(\pi(q))|.
\]
The discrepancy between the volume forms $\pi^*dvol_L$ and $dvol_{L'}$ is bounded by $C |\vec{T}(q)- \vec{T}(\pi(q))|^2 dvol_{L'}$. Consquently, the F-metric is bounded by
\[
C\epsilon^{1/4n}+ \int_{L'\setminus \pi^{-1}(E)} |\vec{T}(q)- \vec{T}(\pi(q))| dvol_{L'} + C\int_{L'\setminus \pi^{-1}(E)} |\vec{T}(q)- \vec{T}(\pi(q))|^2 dvol_{L'}.
\]
Using Prop. \ref{almostgraphical} and Cauchy-Schwarz, this is bounded by $C\epsilon^{1/8n}$.
\end{proof}

\begin{Acknowledgement}
The author is currently a Clay Research Fellow based at MIT. He thanks P. Seidel for discussions.	
\end{Acknowledgement}

\end{document}